\newtcbox{\mymath}[1][]{%
    nobeforeafter, math upper, tcbox raise base,
    enhanced, colframe=blue!30!black,
    colback=blue!30, boxrule=1pt,
    #1}
\definecolor{shadecolor}{gray}{0.90}				
\def\boitegrise#1#2{\begin{centerline}{\fcolorbox{black}{shadecolor}{~
    \begin{minipage}[t]{#2}{\vphantom{~}#1\vphantom{$A_{\displaystyle{A_A}}$}}
            \end{minipage}~}}\end{centerline}\medskip}
\newcommand{\FibSeq}{\left(F_n\right)_{n \geq 0}}
\newcommand{\LucSeq}{\left(L_n\right)_{n \geq 0}}
\newcommand{\GibSeq}{\left(G_n\right)_{n \geq 0}}
\newcommand{\PellSeq}{\left(P_n\right)_{n \geq 0}}
\newcommand{\QellSeq}{\left(Q_n\right)_{n \geq 0}}
\newcommand{\BalSeq}{\left(B_n\right)_{n \geq 0}}
\newcommand{\LucBalSeq}{\left(C_n\right)_{n \geq 0}}
\newcommand{\JacSeq}{\left(J_n\right)_{n \geq 0}}
\newcommand{\JacLucSeq}{\left(j_n\right)_{n \geq 0}}
\newcommand{\DLB}{\mathfrak{C}}
\newcommand{\Mer}{\mathcalb{M}}
\newcommand{\MerLuc}{\mathcalb{F}}
\newcommand{\DoubleLucBalSeq}{\left(\DLB_n\right)_{n \geq 0}}
\newcommand{\MerSeq}{\left(\Mer_n\right)_{n \geq 0}}
\newcommand{\MerLucSeq}{\left(\MerLuc_n\right)_{n \geq 0}}
\newcommand{\LucasFormalUSeq}{\left(U_n(p,q)\right)_{n \geq 0}}
\newcommand{\LucasFormalVSeq}{\left(V_n(p,q)\right)_{n \geq 0}}
\newcommand{\LucasFormalUTerm}{U_n(p,q)}
\newcommand{\LucasFormalVTerm}{V_n(p,q)}
\newcommand{\LucasUSeq}{\left(U_n\right)_{n \geq 0}}
\newcommand{\LucasVSeq}{\left(V_n\right)_{n \geq 0}}
\newcommand{\GenericSeq}{\left(S_n\right)_{n \geq 0}}
\newcommand{\seqnum}[1]{\href{https://oeis.org/#1}{\rm \underline{#1}}}
\DeclareFontFamily{U}{BOONDOX-calo}{\skewchar\font=45 }
\DeclareFontShape{U}{BOONDOX-calo}{m}{n}{
  <-> s*[1.05] BOONDOX-r-calo}{}
\DeclareFontShape{U}{BOONDOX-calo}{b}{n}{
  <-> s*[1.05] BOONDOX-b-calo}{}
\DeclareMathAlphabet{\mathcalb}{U}{BOONDOX-calo}{m}{n}
\SetMathAlphabet{\mathcalb}{bold}{U}{BOONDOX-calo}{b}{n}
\DeclareMathAlphabet{\mathbcalb}{U}{BOONDOX-calo}{b}{n}
\newcommand{\red}[1]{\textcolor{red}{#1}}
\newcommand{\blue}[1]{\textcolor{blue}{#1}}
\newcommand{\redbf}[1]{\textcolor{red}{\textbf{#1}}}
\newtheorem{theorem}{Theorem}[section] 
\newtheorem{lemma}[theorem]{Lemma}
\newtheorem{corollary}[theorem]{Corollary}
\newtheorem*{conjecture*}{Conjecture}
\newtheorem{proposition}[theorem]{Proposition}
\theoremstyle{definition}
\theoremstyle{definition}
\newtheorem{definition}[theorem]{Definition}
\theoremstyle{plain}
\theoremstyle{definition}
\newtheorem{question}[theorem]{Question}
\theoremstyle{definition}
\newtheorem{remark}[theorem]{Remark}
\theoremstyle{definition}
\newtheorem{convention}[theorem]{Convention}
\numberwithin{equation}{section}
\begin{document}

\title{\redbf{Weighted sums of Lucas numbers of the first and second kind}}

\author{
{\large aBa~Mbirika}\\
{\small University of Wisconsin-Eau Claire}\\
\href{mailto:mbirika@uwec.edu}{\small\nolinkurl{mbirika@uwec.edu}}}

\date{}

\makeatletter
\newcommand{\subjclass}[2][2020]{%
  \let\@oldtitle\@title%
  \gdef\@title{\@oldtitle\footnotetext{#1 \emph{Mathematics Subject Classification.} #2}}%
}
\newcommand{\keywords}[1]{%
  \let\@@oldtitle\@title%
  \gdef\@title{\@@oldtitle\footnotetext{\emph{Key words and phrases.} #1.}}%
}
\makeatother

\keywords{Lucas sequence of the first kind; Lucas sequence of the second kind;
weighted sum; Abel summation by parts}
\subjclass{Primary 11B37, 40G10; Secondary 11B39, 65B10}

\maketitle

\vspace{-.25in}

\begin{abstract}
In the \textit{Fibonacci Quarterly} in 1964, C.~R.~Wall gave the following weighted sum of generalized Fibonacci numbers: $\sum_{i=1}^n i G_i = n G_{n+2} - G_{n+3} + G_3$,
where $\left(G_n\right)_{n \geq 0}$ is defined by the recurrence $G_n = G_{n-1} + G_{n-2}$ with fixed $G_0, G_1 \in \mathbb{Z}$. In this paper, we generalize Wall's result to the Lucas sequences of the first and second kind, $\left(U_n(p,q)\right)_{n \geq 0}$ and $\left(V_n(p,q)\right)_{n \geq 0}$, and give closed forms for $\sum_{i=1}^n i U_i$ and $\sum_{i=1}^n i V_i$ by using Abel's summation by parts method. Moreover, we provide concrete applications, not only recovering the known weighted sums $\sum_{i=1}^n i F_i$ and $\sum_{i=1}^n i L_i$ of Fibonacci and Lucas numbers, respectively, but also add new identities to the literature for eight well-known sequences. In particular, we give closed forms for weighted sums of Lucas sequences of the first kind such as the Pell, balancing, Jacobstahl, and Mersenne numbers, and also Lucas sequences of the second kind such as the companion Pell, double Lucas-balancing, Jacobstahl-Lucas, and Mersenne-Lucas numbers.
\end{abstract}

\renewcommand{\baselinestretch}{0.75}\normalsize
{\footnotesize\tableofcontents}
\renewcommand{\baselinestretch}{1.0}\normalsize



\section{Introduction, history, and summary of main results}\label{sec:motivation}

For a sequence $\GenericSeq$, there is much in the literature about closed forms for weighted sums of the form $\sum_{i=1}^n \omega_i S_i$ where $\omega_i \in \mathbb{Z}$. For the Fibonacci sequence, this has been well studied for a variety of weights. We provide some of these results in Table~\ref{table:weighted_Fibonacci_sum_examples}.

\begin{table}[!ht]
\renewcommand{\arraystretch}{1.1}
\centering
\begin{tabular}{|c|c|c|}
\hline
\red{$\omega_i$} & \red{Closed form for $\sum_{i=1}^n \omega_i F_i$} & \red{Source}\\ \hline\hline
$1$ & $F_{n+2} - 1$ & Lucas 1878~\cite[(56)\footnotemark]{Lucas1878}\\ \hline
$(-1)^{i+1} \binom{n}{i}$ & $F_n$ & Ruggles 1963~\cite{Ruggles1963}\\ \hline
$\binom{n}{i}$ & $F_{2n}$ & Ruggles 1963~\cite{Ruggles1963}\\
&  & and Vajda 1989~\cite[(47)\footnotemark]{Vajda1989}\\
\hline
$i$ & $(n+1) F_{n+2} - F_{n+4} + 2$ & Gauthier 1995~\cite{Gauthier1995}\\ \hline
$i^2$ & $(n + 1)^2 F_{n + 2} - (2 n + 3) F_{n + 4} + 2 F_{n + 6} - 8$ & Gauthier 1995~\cite{Gauthier1995}\\ \hline
$i^3$ & $(n + 1)^3 F_{n + 2} - (3n^2 +9n + 7) F_{n + 4}$ & Gauthier 1995~\cite{Gauthier1995}\\
& $+ (6n + 12) F_{n + 6} - 6 F_{n+8} + 50$ & \\ \hline
$2^{n-i}$ & $2^n - F_{n+2}$ & Benjamin-Quinn 2003~\cite[(10)\footnotemark]{Benjamin-Quinn2003} \\ \hline
$(-1)^{i}$ & $(-1)^n F_{n-1} - 1$ & Benjamin-Quinn 2003~\cite[(21)\footnotemark]{Benjamin-Quinn2003} \\ \hline
\end{tabular}
\caption{Closed forms for a variety of weighted Fibonacci sums}
\label{table:weighted_Fibonacci_sum_examples}
\end{table}
\footnotetext[1]{This result is a consequence of Lucas' Identity~(56), which unfortunately had two typos, a minor typo for $\sum_{i=1}^n U_{ir}$ and a nontrivial typo for $\sum_{i=1}^n V_{ir}$, in his seminal 1878 paper (see Subsection~\ref{subsec:consecutive_sum_when_p_minus_q_does_not_equal_1}). Moreover, see Theorem~\ref{thm:corrected_Identity_(56)_for_V_seq} in the appendix for our fixed version and proof of the closed form for $\sum_{i=1}^n V_{ir}$.}
\footnotetext[2]{Vajda actually gives a closed form for $\sum_{i=1}^n \binom{n}{i} G_i$ where $\GibSeq$ is a generalized Fibonacci sequence.}
\footnotetext[3]{This is an equivalent formulation of Benjamin and Quinn's Identity~(10).}
\footnotetext[4]{This is an equivalent formulation of Benjamin and Quinn's Identity~(21).}

The methods to prove the closed forms in Table~\ref{table:weighted_Fibonacci_sum_examples} vary greatly among the authors. For instance, Ruggles considered the binomial expansions of $(1 + \alpha)^n$ and $(1 + \beta)^n$, where $\alpha$ and $\beta$ are golden ratio and its negative reciprocal, respectively. Vajda proved the Ruggles result by induction on $n$. Gauthier introduced the differential operator method to explore sums of the form $\sum_{i=1}^n i^m F_i$ for a fixed $m \in \mathbb{N}$ (although for $m \geq 4$, the closed forms become unwieldly with $m+2$ complicated summands). Benjamin and Quinn employed combinatorial reasoning techniques involving tiling a board of length $n$ with only squares and dominoes. To prove some of our main results (Theorems~\ref{thm:weighted_sum_U_seq_when_p_minus_q_is_1}, \ref{thm:weighted_sum_V_seq_when_p_minus_q_is_1}, \ref{thm:weighted_sum_U_seq_when_p_minus_q_is_not_equal_to_1}, and \ref{thm:weighted_sum_V_seq_when_p_minus_q_is_not_equal_to_1}), we employ a technique different than the latter methods; we utilize Abel's summation by parts technique.

When $\omega_i = 1$ for all $1 \leq i \leq n$, we simply have the sum of the first $n$ sequence terms $\sum_{i=1}^n S_i$, and this has been well-studied for a large number of popular sequences. When $\GenericSeq$ is either the Fibonacci or Lucas sequence and we let $\omega_i = i$, closed forms are well known (see for example Koshy~\cite[Identities~(25.1) and (25.2)]{Koshy2018}). However, a much earlier closed form was given by C.~R.~Wall in 1964 in the \textit{Fibonacci Quarterly} for the generalized Fibonacci sequence $\GibSeq$, defined by the recurrence $G_n = G_{n-1} + G_{n-2}$ with arbitrary integral initial conditions $G_0$ and $G_1$. His result was the following~\cite[Problem~B-40]{CR-B-40-Wall1964}:
$$ \sum_{i=1}^n i G_i = n G_{n+2} - G_{n+3} + G_3.$$
The goal of our paper is to generalize the latter weighted sum to the setting of the Lucas sequences of the first and second kind, namely $\LucasFormalUSeq$ and $\LucasFormalVSeq$, respectively. To accomplish this, we first give closed forms for the sum of consecutive Lucas sequences $\sum_{i=1}^n U_i(p,q)$ and $\sum_{i=1}^n V_i(p,q)$ and then use Abel's summation by parts method to derive closed forms for the weighted sums $\sum_{i=1}^n i U_i(p,q)$ and $\sum_{i=1}^n i V_i(p,q)$. Set $U_i \coloneq U_i(p,q)$ and $V_i \coloneq V_i(p,q)$ for brevity. The paper is organized as follows. In Section~\ref{sec:preliminaries}, we provide the definitions of the sequences $\LucasUSeq$ and $\LucasVSeq$, relevant sequence identities, establish our convention regarding nondegenerate Lucas sequences, and give 10 well-known Lucas sequences to which we apply our main results in Section~\ref{sec:applications_to_well_known_sequences}. In Section~\ref{sec:main_results_consecutive_sums}, we derive the following closed forms for the sums of consecutive terms:
\begin{align*}
\sum_{i=1}^n U_i &= \begin{dcases}\frac{q^{n+1} - (q-1) n - q}{(q-1)^2}  = \frac{q U_n - n}{q-1},  &\text{if $p - q = 1$;} \;\; \text{(Theorem~\ref{thm:consec_sum_U_seq_when_p_minus_q_is_1})}\\
\frac{U_{n+1} - q U_n - 1}{p-q-1}, &\text{if $p - q \neq 1$;} \;\; \text{(Theorem~\ref{thm:consec_sum_U_seq_when_p_minus_q_is_not_equal_to_1})}
\end{dcases}\\
\\
\sum_{i=1}^n V_i &= \begin{dcases} U_{n+1} + n - 1,  &\text{if $p - q = 1$;} \;\; \text{(Theorem~\ref{thm:consec_sum_V_seq_when_p_minus_q_is_1})}\\
\frac{V_{n+1} - q V_n - p + 2q}{p-q-1}, &\text{if $p - q \neq 1$}. \;\; \text{(Theorem~\ref{thm:consec_sum_V_seq_when_p_minus_q_is_not_equal_to_1})}
\end{dcases}
\end{align*}
In Section~\ref{sec:main_results_weighted_sums}, we derive the following closed forms for the weighted sums of consecutive terms: 
\begin{align*}
\sum_{i=1}^n i U_i &= \begin{dcases}\frac{q}{q-1} \biggl( n U_n - \frac{q U_{n-1} - n + 1}{q-1} - \frac{1}{q} \cdot \binom{n+1}{2} \biggr),  &\text{if $p - q = 1$;} \;\; \text{(Theorem~\ref{thm:weighted_sum_U_seq_when_p_minus_q_is_1})}\footnotemark \\
\frac{1}{p-q-1} \Bigl( n(U_{n+1} - q U_n) - \frac{\Omega(n)}{p-q-1} \Bigr), &\text{if $p - q \neq 1$;} \;\; \text{(Theorem~\ref{thm:weighted_sum_U_seq_when_p_minus_q_is_not_equal_to_1})}
\end{dcases}\\
&\;\;\;\;\;\;\text{ where } \Omega(n) = U_{n+1} - 2q U_n + q^2 U_{n-1} + q - 1,\\
\\
\sum_{i=1}^n i V_i &= \begin{dcases} n U_{n+1} - \frac{q U_n - n}{q-1} + \binom{n+1}{2},  &\text{if $p - q = 1$;} \;\; \text{(Theorem~\ref{thm:weighted_sum_V_seq_when_p_minus_q_is_1})}\footnotemark\\
\frac{1}{p-q-1} \biggl( n (V_{n+1} - q V_n) + 2 q - \frac{\Psi(n)}{p-q-1} \biggr), &\text{if $p - q \neq 1$;} \;\; \text{(Theorem~\ref{thm:weighted_sum_V_seq_when_p_minus_q_is_not_equal_to_1})}
\end{dcases}\\
&\;\;\;\;\;\;\text{ where } \Psi(n) = V_{n+1} - 2q V_n + q^2 V_{n-1} +(p-2q)(q-1).
\end{align*}
\vspace{-.28in}
\footnotetext[5]{We also derive an equivalent closed form $\sum_{i=1}^n i U_i = \frac{q}{q-1} \left( \frac{q^n (nq-n-1) + 1}{(q-1)^2} - \frac{1}{q} \binom{n+1}{2} \right)$ in Corollary~\ref{cor:weighted_sum_U_seq_when_p_minus_q_is_1}.}
\footnotetext[6]{We also derive an equivalent closed form $\sum_{i=1}^n i V_i = \frac{q^{n+1} (nq-n-1)+q}{(q-1)^2} + \binom{n+1}{2}$ in Corollary~\ref{cor:weighted_sum_V_seq_when_p_minus_q_is_1}.}

\noindent Then in Section~\ref{sec:applications_to_well_known_sequences}, we apply our weighted sum formulas to 10 well-known Lucas sequences: Fibonacci $\FibSeq$, Lucas $\LucSeq$, Pell $\PellSeq$, companion Pell $\QellSeq$, balancing $\BalSeq$, double Lucas-balancing $\DoubleLucBalSeq$, Jacobstahl $\JacSeq$, Jacobstahl-Lucas $\JacLucSeq$, Mersenne $\MerSeq$, and Mersenne-Lucas $\MerLucSeq$. We not only recover known results for $\sum_{i=1}^n i F_i$ and $\sum_{i=1}^n i L_i$ in Theorem~\ref{thm:Fib_Luc_weighted_sum}, but also introduce new results for the remaining eight sequences in Theorems~\ref{thm:Pell_Qell_weighted_sum}, \ref{thm:Bal_DoubleLucBal_weighted_sum}, \ref{thm:Jac_JacLuc_weighted_sum}, and \ref{thm:Mer_MerLuc_weighted_sum}, as follows:
\begin{align*}
&\sum_{i=1}^n i F_i = n F_{n+2} - F_{n+3} + F_3 &&\sum_{i=1}^n i L_i = n L_{n+2} - L_{n+3} + L_3\\
&\sum_{i=1}^n i P_i = \frac{1}{2} \Bigl( (n-1) P_{n+1} + n P_{n} + 1\Bigr) &&\sum_{i=1}^n i Q_i = \frac{1}{2} \Bigl( (n-1) Q_{n+1} + n Q_{n} + 2 \Bigr)\\
&\sum_{i=1}^n i B_i = \frac{1}{4} \Bigl( n B_{n+1} - (n+1) B_{n} \Bigr) &&\sum_{i=1}^n i \DLB_i = \frac{1}{4} \Bigl( n \DLB_{n+1} - (n+1) \DLB_{n} + 2 \Bigr)\\
&\sum_{i=1}^n i J_i = \frac{1}{2} \Bigl( n J_{n+2} - \frac{J_{n+3}-3}{2} \Bigr) &&\sum_{i=1}^n i j_i = \frac{1}{2} \Bigl( n j_{n+2} - 4 - \frac{j_{n+3}-15}{2} \Bigr)\\
&\sum_{i=1}^n i \Mer_i = 2^{n+1}(n-1) + 2 - \binom{n+1}{2} &&\sum_{i=1}^n i \MerLuc_i = 2^{n+1}(n-1) + 2 + \binom{n+1}{2}.
\end{align*}
Finally in Section~\ref{sec:open_questions}, we give further directions that the author is currently exploring and offer some tantalizing open questions for the motivated reader.



\section{Preliminaries}\label{sec:preliminaries}

\begin{definition}[Lucas sequences]
Let $p$ and $q$ be nonzero (not necessarily coprime) integers. The \textit{Lucas sequences of the first and second kind}, respectively, are denoted $\LucasFormalUSeq$ and $\LucasFormalVSeq$. Setting $U_n \coloneq U_n(p,q)$ and $V_n \coloneq V_n(p,q)$, the sequences $\LucasUSeq$ and $\LucasVSeq$ satisfy the recurrence relations $U_n = p U_{n-1} - q U_{n-2}$ and $V_n = p V_{n-1} - q V_{n-2}$, respectively, with initial conditions $U_0 = 0$, $U_1 = 1$, $V_0 = 2$, and $V_1=p$.
\end{definition}

\begin{remark}
Ballot and Williams assert that although Lucas and subsequently many other authors introduce the constraint that $\gcd(p,q)=1$, the mathematical literature contains many papers that prove results about the Lucas sequences with $\gcd(p,q)=1$, which hold just as well for Lucas sequences where $\gcd(p,q) \neq 1$~\cite[p.~13]{Ballot-Williams2023}.
\end{remark}

The well-known Binet forms for the Lucas sequences, as well as some identities on the roots of the sequences' characteristic polynomial, are given in the following proposition.

\begin{proposition}[Lucas~{\cite[Identity~(2)]{Lucas1878}}]\label{prop:identities_on_the_roots_of_the_characteristic_equation}
Let $\alpha$ and $\beta$ be roots of the \textit{characteristic polynomial} $x^2 - px + q$ in the quadratic field $\mathbb{Q}(\sqrt{\Delta})$ where $\Delta = p^2 - 4q$ is the \textit{discriminant} of the polynomial. Then $\alpha = \frac{p + \sqrt{\Delta}}{2}$ and $\beta = \frac{p - \sqrt{\Delta}}{2}$. Moreover, the Lucas sequences $\LucasFormalUSeq$ and $\LucasFormalVSeq$, respectively, are given by
$$ U_n(p,q) = \frac{\alpha^n - \beta^n}{\alpha - \beta} \;\;\;\text{and}\;\;\; V_n(p,q) = \alpha^n + \beta^n.$$
\end{proposition}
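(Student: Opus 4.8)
The plan is to prove the two closed forms by the standard strategy of verifying that each candidate satisfies the defining recurrence together with the correct initial data, and then invoking uniqueness, after first pinning down $\alpha$ and $\beta$ via the quadratic formula. For the root expressions, applying the quadratic formula to $x^2 - px + q = 0$ immediately yields $x = \frac{p \pm \sqrt{p^2 - 4q}}{2} = \frac{p \pm \sqrt{\Delta}}{2}$, which gives the stated $\alpha$ and $\beta$; simultaneously, Vieta's formulas record the two identities I will lean on throughout, namely $\alpha + \beta = p$ and $\alpha \beta = q$.

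Next I would introduce candidate sequences $u_n := \frac{\alpha^n - \beta^n}{\alpha - \beta}$ and $v_n := \alpha^n + \beta^n$, and argue that each satisfies the recurrence $w_n = p w_{n-1} - q w_{n-2}$. The key observation is that both roots satisfy $\alpha^2 = p\alpha - q$ and $\beta^2 = p\beta - q$; multiplying these by $\alpha^{n-2}$ and $\beta^{n-2}$, respectively, gives $\alpha^n = p\alpha^{n-1} - q\alpha^{n-2}$ and $\beta^n = p\beta^{n-1} - q\beta^{n-2}$ for all $n \geq 2$. Since $u_n$ and $v_n$ are fixed linear combinations of the power sequences $(\alpha^n)$ and $(\beta^n)$, and the recurrence operator is linear, both $u_n$ and $v_n$ inherit the recurrence.

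I would then match the initial conditions: $u_0 = 0$, $u_1 = \frac{\alpha - \beta}{\alpha - \beta} = 1$, $v_0 = 2$, and $v_1 = \alpha + \beta = p$, which agree with $U_0, U_1, V_0, V_1$ from the Definition. Because a second-order linear recurrence with prescribed values at two consecutive indices has a unique solution, I conclude that $u_n = U_n$ and $v_n = V_n$ for every $n \geq 0$, which is exactly the assertion of the proposition.

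The only delicate point, and hence the one obstacle worth flagging, is the appearance of $\alpha - \beta$ in the denominator of the $U$-form, which vanishes precisely when $\Delta = 0$, that is, when the characteristic polynomial has a repeated root. This degenerate case is excluded by the nondegeneracy convention adopted in this section, so the quotient defining $u_n$ is well defined throughout; away from it, every step above is a routine algebraic verification.
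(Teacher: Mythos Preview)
Your argument is correct: you pin down $\alpha,\beta$ via the quadratic formula, verify that $u_n=\frac{\alpha^n-\beta^n}{\alpha-\beta}$ and $v_n=\alpha^n+\beta^n$ satisfy the recurrence because $\alpha$ and $\beta$ do, check the four initial values, and conclude by uniqueness of solutions to a second-order linear recurrence with two prescribed initial terms. You also correctly flag that $\alpha-\beta\neq 0$ is guaranteed by the nondegeneracy convention (Convention~\ref{conv:nondegenerate_Lucas_sequences}).

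There is nothing to compare against, however: the paper does not supply its own proof of this proposition. It is stated as a classical result attributed to Lucas~\cite[Identity~(2)]{Lucas1878} and then used as a black box (for instance in the proof of Lemma~\ref{lem:U_and_V_seq_when_p_minus_q_is_1}). Your write-up would serve perfectly well as a self-contained proof were one desired.
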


\boitegrise{
\begin{convention}\label{conv:nondegenerate_Lucas_sequences}
\vspace{-.15in}
In this paper, we assume that  $\LucasFormalUSeq$ and $\LucasFormalVSeq$ are \textit{nondegenerate}. That is, $q \neq 0$ and the ratio $\frac{\alpha}{\beta}$ is not a root of unity. In particular, this implies that $\alpha$ and $\beta$ are distinct
and hence $\Delta \neq 0$. For $\gcd(p,q)=1$, a Lucas sequence is degenerate if $(p,q) \in \{(\pm 2,1)$, $(\pm 1,1)$, $(0,\pm 1)$, and $(\pm 1,0)\}$~\cite[pp.~5--6]{Ribenboim2000}.
\vspace{-.3in}
\end{convention}}{0.95\textwidth}

In Table~\ref{table:10_well_known_Lucas_sequences}, we give the first 10 terms and their corresponding sequence entries in the OEIS~\cite{Sloane-OEIS} of the 10 Lucas sequences we explore in this paper in the following list:
\begin{align*}
\left(U_n(1,-1)\right)_{n \geq 0} &= \FibSeq &&\hspace{-.5in}\text{Fibonacci}\\
\left(V_n(1,-1)\right)_{n \geq 0} &= \LucSeq &&\hspace{-.5in}\text{Lucas}\\
\left(U_n(2,-1)\right)_{n \geq 0} &= \PellSeq &&\hspace{-.5in}\text{Pell}\\
\left(V_n(2,-1)\right)_{n \geq 0} &= \QellSeq &&\hspace{-.5in}\text{companion Pell} &&\hspace{-.5in}\text{(see Remark~\ref{rem:Pell_Lucas_confusion})}\\
\left(U_n(6,1)\right)_{n \geq 0} &= \BalSeq &&\hspace{-.5in}\text{balancing}\\
\left(V_n(6,1)\right)_{n \geq 0} &= \DoubleLucBalSeq &&\hspace{-.5in}\text{double Lucas-balancing} &&\hspace{-.5in}\text{(see Remark~\ref{rem:Lucas_balancing_confusion})}\\
\left(U_n(1,-2)\right)_{n \geq 0} &= \JacSeq &&\hspace{-.5in}\text{Jacobstahl}\\
\left(V_n(1,-2)\right)_{n \geq 0} &= \JacLucSeq &&\hspace{-.5in}\text{Jacobstahl-Lucas}\\
\left(U_n(3,2)\right)_{n \geq 0} &= \MerSeq &&\hspace{-.5in}\text{Mersenne}\\
\left(V_n(3,2)\right)_{n \geq 0} &= \MerLucSeq &&\hspace{-.5in}\text{Mersenne-Lucas.}
\end{align*}
We denote a Mersenne-Lucas number with the symbol $\MerLuc_n$ as it turns out that $\MerLuc_n = 2^n + 1$, which are the so-called ``Fermat numbers'' whenever $n = 2^k$ for some $k$.


\begin{table}[h!]
\renewcommand{\arraystretch}{1.1}
\centering
\begin{tabular}{|c||c|c|c|c|c|c|c|c|c|c||c|}
\hline
\blue{$n$} & \redbf{0} & \redbf{1} & \redbf{2} & \redbf{3} & \redbf{4} & \redbf{5} & \redbf{6} & \redbf{7} & \redbf{8} & \redbf{9} & \blue{OEIS}\\ \hline\hline
\rowcolor{lightgray}
\blue{$F_n$} & 0 & 1 & 1 & 2 & 3 & 5 & 8 & 13 & 21 & 34 & \seqnum{A000045}\\ \hline
\blue{$L_n$} & 2 & 1 & 3 & 4 & 7 & 11 & 18 & 29 & 47 & 76 & \seqnum{A000032}\\ \hline\hline
\rowcolor{lightgray}
\blue{$P_n$} & 0 & 1 & 2 & 5 & 12 & 29 & 70 & 169 & 408 & 985 & \seqnum{A000129}\\ \hline
\blue{$Q_n$} & 2 & 2 & 6 & 14 & 34 & 82 & 198 & 478 & 1154 & 2786 & \seqnum{A002203}\\ \hline\hline
\rowcolor{lightgray}
\blue{$B_n$} & 0 & 1 & 6 & 35 & 204 & 1189 & 6930 & 40391 & 235416 & 1372105 & \seqnum{A001109}\\ \hline
\blue{$\DLB_n$} & 2 & 6 & 34 & 198 & 1154 & 6726 & 39202 & 228486 & 1331714 & 7761798 & \seqnum{A001541} \\ \hline
\rowcolor{lightgray}
\blue{$J_n$} & 0 & 1 & 1 & 3 & 5 & 11 & 21 & 43 & 85 & 171 & \seqnum{A001045}\\ \hline
\blue{$j_n$} & 2 & 1 & 5 & 7 & 17 & 31 & 65 & 127 & 257 & 511 & \seqnum{A014551}\\ \hline\hline
\rowcolor{lightgray}
\blue{$\Mer_n$} & 0 & 1 & 3 & 7 & 15 & 31 & 63 & 127 & 255 & 511 & \seqnum{A000225}\\ \hline
\blue{$\MerLuc_n$} & 2 & 3 & 5 & 9 & 17 & 33 & 65 & 129 & 257 & 513 & \seqnum{A000051}\\ \hline
\end{tabular}
\caption{The first 10 Fibonacci $F_n$, Lucas $L_n$, Pell $P_n$, companion Pell $Q_n$, balancing $B_n$, double Lucas-balancing $\DLB_n$, Jacobstahl $J_n$, Jacobstahl-Lucas $j_n$, Mersenne $\Mer_n$, and Mersenne-Lucas $\MerLuc_n$ numbers}
\label{table:10_well_known_Lucas_sequences}
\end{table}

\begin{remark}\label{rem:Pell_Lucas_confusion}
The sequence $\QellSeq$ in Table~\ref{table:10_well_known_Lucas_sequences} is sometimes called the Pell-Lucas sequence. However, in the literature, there is some discrepancy on the precise definition of what is called the ``Pell-Lucas sequence''. Many sources attribute $\QellSeq$ to OEIS sequence \seqnum{A002203} where it is called the ``companion Pell sequence''. In 1976, Hoggatt and Alexanderson have one of the earlierst papers defining $\QellSeq$ as the Pell-Lucas sequence~\cite[Section~6]{Hoggatt1976}. On the other hand, Koshy~\cite{Koshy2014} and many other researchers  define the Pell-Lucas sequence with initial values $Q_0 = 1$ and $Q_1 = 1$, producing a sequence that is technically $\left(Q_n / 2\right)_{n \geq 0}$, sometimes called the ``half-companion Pell sequence'' or the ``associated Pell sequence'', which is not a Lucas sequence of the first or second kind. The latter is the OEIS sequence \seqnum{A001333}.
\end{remark}

\begin{remark}\label{rem:Lucas_balancing_confusion}
The well-studied Lucas-balancing sequence $\LucBalSeq$ was introduced by Behera and Panda in 1999 in the \textit{Fibonacci Quarterly} although not yet denoted $C_n$~\cite{Behera-Panda1999}. The Lucas-balancing number $C_n$ is related to the balancing number $B_n$ by the equality $C_n = \sqrt{8 B_n^2 + 1}$. However, the sequence $\LucBalSeq$ is not actually a Lucas sequence of the first or second kind, as its initial conditions are $C_0 = 1$ and $C_1 = 3$ with recurrence $C_n = 6 C_{n-1} - C_{n-2}$. However, if we double the terms of the sequence, we do get the Lucas sequence $\left(V_n(6,1)\right)_{n \geq 0}$, hence we call $\DoubleLucBalSeq$ the ``double Lucas-balancing sequence'' with $\DLB_n = 2C_n$, and $\DoubleLucBalSeq$ has recurrence $\DLB_n = 6 \DLB_{n-1} - \DLB_{n-2}$ with initial conditions $\DLB_0 = 2$ and $\DLB_1 = 6$.
\end{remark}




\section{Main results for the consecutive sums}\label{sec:main_results_consecutive_sums}

\boitegrise{
\begin{convention}\label{conv:notation_for_Lucas_sequences}
\vspace{-.15in}
For brevity in the remainder of the paper, we set $U_n\coloneq\LucasFormalUTerm$ and $V_n\coloneq\LucasFormalVTerm$ when it is clear what the $p$ and $q$ values are. Therefore, we often write the Lucas sequences $\LucasFormalUSeq$ and $\LucasFormalVSeq$, respectively, as $\LucasUSeq$ and $\LucasVSeq$.
\vspace{-.3in}
\end{convention}}{0.95\textwidth}

To provide our main results for the sums $\sum_{i=1}^n U_i$ and $\sum_{i=1}^n V_i$, we first handle the case when $p-q=1$ in Subsection~\ref{subsec:consecutive_sum_when_p_minus_q_equals_1}. Then we handle the case when $p-q \neq 1$ in Subsection~\ref{subsec:consecutive_sum_when_p_minus_q_does_not_equal_1}.


\subsection{Sum of consecutive terms when \texorpdfstring{$p-q=1$}{p-q=1}}\label{subsec:consecutive_sum_when_p_minus_q_equals_1}

Recall that we are excluding the degenerate Lucas sequences as per Convention~\ref{conv:nondegenerate_Lucas_sequences}. But since we assume that $p-q=1$ in this subsection, this means we are excluding the cases when $(p,q)$ equals $(0,-1)$, $(1,0)$, or $(2,1)$. However, the following lemma holds even for the degenerate Lucas sequences.

\begin{lemma}\label{lem:U_and_V_seq_when_p_minus_q_is_1}
Let $p$ and $q$ be integers such that $p - q = 1$. Then for all $n \geq 0$, we have
\begin{align}
    U_n &=  \frac{q^n - 1}{q-1} = 1 + q + q^2 + \cdots + q^{n-1} \label{eq:U_seq_when_p_minus_q_is_1}\\
    V_n &= q^n + 1. \label{eq:V_seq_when_p_minus_q_is_1}
\end{align}
\end{lemma}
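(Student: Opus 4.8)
The plan is to exploit the factorization of the characteristic polynomial forced by the hypothesis $p - q = 1$. Writing $p = q + 1$, the polynomial $x^2 - px + q$ becomes $x^2 - (q+1)x + q = (x - 1)(x - q)$, so its roots are simply $1$ and $q$ (in either order; one can double-check via $\Delta = (q+1)^2 - 4q = (q-1)^2$). First I would record this factorization, then substitute the two roots directly into the Binet forms of Proposition~\ref{prop:identities_on_the_roots_of_the_characteristic_equation}. Because both $\frac{\alpha^n - \beta^n}{\alpha - \beta}$ and $\alpha^n + \beta^n$ are symmetric under interchanging $\alpha$ and $\beta$, the labeling of the roots is irrelevant, and we obtain $U_n = \frac{q^n - 1}{q - 1}$ and $V_n = q^n + 1$ in one stroke. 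The geometric-series identity $\frac{q^n - 1}{q - 1} = 1 + q + \cdots + q^{n-1}$ then supplies the second expression for $U_n$.

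The main obstacle is the degenerate case $q = 1$, i.e. $(p,q) = (2,1)$, which the statement explicitly includes (recall the remark that the lemma holds even for degenerate sequences). Here the two roots collide at $1$, so the Binet quotient degenerates into $\frac{0}{0}$ and the closed form $\frac{q^n - 1}{q - 1}$ is literally undefined; this is precisely why the lemma is phrased with the geometric-sum expression $1 + q + \cdots + q^{n-1}$, which at $q = 1$ correctly reads $U_n = n$. To cover this case — and, uniformly, every case — I would fall back on a short induction on $n$ that never divides by $q - 1$ and never invokes the root factorization.

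For the induction the base cases are immediate: $U_0 = 0$ and $U_1 = 1$ match the empty and single-term geometric sums, while $V_0 = 2 = q^0 + 1$ and $V_1 = p = q + 1 = q^1 + 1$. For the inductive step I would rewrite the recurrence using $p = q + 1$ as $U_n = (q+1)U_{n-1} - q U_{n-2} = U_{n-1} + q\,(U_{n-1} - U_{n-2})$, and observe that, by the inductive hypothesis, $U_{n-1} - U_{n-2} = q^{n-2}$ telescopes the two geometric sums; adding the contribution $q \cdot q^{n-2} = q^{n-1}$ extends $1 + q + \cdots + q^{n-2}$ to $1 + q + \cdots + q^{n-1}$, as required. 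The verification for $V_n$ is even shorter: substituting $(q+1)(q^{n-1} + 1) - q(q^{n-2} + 1)$ and cancelling leaves $q^n + 1$. Since neither inductive step uses any division, it validates both formulas for all integers $q \neq 0$, degenerate cases included, and in particular recovers the Binet-based derivation whenever $q \neq 1$.
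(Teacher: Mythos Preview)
Your Binet-form argument is essentially the paper's proof: both compute $\Delta = (q-1)^2$, identify the roots as $q$ and $1$, and read off $U_n$ and $V_n$ from Proposition~\ref{prop:identities_on_the_roots_of_the_characteristic_equation}. The only cosmetic difference is that you factor $x^2-(q+1)x+q=(x-1)(x-q)$ directly, whereas the paper plugs into the quadratic formula.

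Where you go beyond the paper is in your treatment of the degenerate case $q=1$. The paper's proof silently divides by $\alpha-\beta=q-1$, so it does not actually cover $(p,q)=(2,1)$ even though the text preceding the lemma asserts that the result holds for degenerate sequences as well. Your induction on $n$, using $U_n = U_{n-1} + q(U_{n-1}-U_{n-2})$ and the telescoping difference $U_{n-1}-U_{n-2}=q^{n-2}$, never divides by $q-1$ and therefore genuinely earns the degenerate case; this is a real improvement in rigor over the paper's argument. The cost is minimal, and the induction also makes the lemma self-contained (independent of the Binet machinery).
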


\begin{proof}
Since $p - q = 1$, we have $p = q + 1$ and thus $\Delta = p^2 - 4q = (q+1)^2 - 4q = (q-1)^2$. By Proposition~\ref{prop:identities_on_the_roots_of_the_characteristic_equation}, it follows that
$$ \alpha = \frac{p + \sqrt{\Delta}}{2} = \frac{(q+1) + (q-1)}{2} = q \;\;\text{ and }\;\; \beta = \frac{p - \sqrt{\Delta}}{2} = \frac{p-(q-1)}{2} = 1.$$
Hence by the Binet form for $U_n$ in Proposition~\ref{prop:identities_on_the_roots_of_the_characteristic_equation} and a finite geometric series argument,
$$U_n = \frac{\alpha^n - \beta^n}{\alpha - \beta} = \frac{q^n - 1}{q-1} = 1 + q + q^2 + \cdots + q^{n-1}, $$
proving Equation~\eqref{eq:U_seq_when_p_minus_q_is_1}. Furthermore, by the Binet form for $V_n$ in Proposition~\ref{prop:identities_on_the_roots_of_the_characteristic_equation}, we have $V_n = \alpha^n + \beta^n = q^n +1$, proving Equation~\eqref{eq:V_seq_when_p_minus_q_is_1}.
\end{proof}

In the next theorem, the closed form is an indeterminate whenever $q=1$, so we eliminate that choice. This is fine since $(p,q) = (2,1)$ yields a degenerate sequence $\LucasFormalUSeq$.

\begin{theorem}\label{thm:consec_sum_U_seq_when_p_minus_q_is_1}
Let $p$ and $q$ be integers such that $q \neq 1$ and $p - q = 1$, and hence $\Delta = (q-1)^2$ is the discriminant of the characteristic polynomial $x^2 - px + q$. Then for all $n \geq 1$, we have
\begin{align}
\sum_{i=1}^n U_i  = \frac{q^{n+1} - (q-1) n - q}{\Delta}  = \frac{q U_n - n}{q-1}. \label{eq:sum_of_n_consec_U_seq_terms}
\end{align}
\end{theorem}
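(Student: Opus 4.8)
The plan is to reduce the sum to two elementary geometric series by invoking the explicit formula for $U_n$ already established in Lemma~\ref{lem:U_and_V_seq_when_p_minus_q_is_1}. Since $p-q=1$, Equation~\eqref{eq:U_seq_when_p_minus_q_is_1} gives $U_i = \frac{q^i - 1}{q-1}$ for every $i \geq 0$, and because $q \neq 1$ the denominator $q-1$ never vanishes, so all the divisions below are legitimate. First I would substitute this closed form term by term and split the sum:
\begin{align*}
\sum_{i=1}^n U_i = \sum_{i=1}^n \frac{q^i - 1}{q-1} = \frac{1}{q-1}\left( \sum_{i=1}^n q^i - \sum_{i=1}^n 1 \right) = \frac{1}{q-1}\left( \frac{q^{n+1} - q}{q-1} - n \right).
\end{align*}
Here the only subtlety worth flagging is that the geometric sum starts at $i=1$ rather than $i=0$, so $\sum_{i=1}^n q^i = \frac{q^{n+1} - q}{q-1}$ (with a $q$, not a $1$, in the numerator); getting this offset right is essentially the whole content of the bookkeeping.

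Next I would clear the inner fraction by placing everything over the common denominator $(q-1)^2$, which equals $\Delta$ by the hypothesis $\Delta = (q-1)^2$ (itself computed in the proof of Lemma~\ref{lem:U_and_V_seq_when_p_minus_q_is_1}). This yields
\begin{align*}
\sum_{i=1}^n U_i = \frac{q^{n+1} - q - n(q-1)}{(q-1)^2} = \frac{q^{n+1} - (q-1)n - q}{\Delta},
\end{align*}
which is the first claimed equality.

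To obtain the second, more compact expression $\frac{q U_n - n}{q-1}$, I would work backwards from $U_n = \frac{q^n - 1}{q-1}$: multiplying by $q$ gives $q U_n = \frac{q^{n+1} - q}{q-1}$, so that $q U_n - n = \frac{q^{n+1} - q - n(q-1)}{q-1}$, and dividing once more by $q-1$ reproduces exactly the numerator $q^{n+1} - (q-1)n - q$ over $(q-1)^2 = \Delta$. Thus both stated forms agree.

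Honestly, there is no genuine obstacle here: the result is a direct consequence of Lemma~\ref{lem:U_and_V_seq_when_p_minus_q_is_1} together with the summation of a finite geometric progression, and the proof is complete once these two routine series evaluations are recorded. The only things demanding attention are ensuring $q \neq 1$ (guaranteed by hypothesis, and justified by noting that $(p,q)=(2,1)$ is the excluded degenerate case) so that $q-1$ is invertible, and tracking the lower index $i=1$ carefully in the geometric sum. An alternative route would be a straightforward induction on $n$, using the recurrence or the base case $\sum_{i=1}^1 U_i = U_1 = 1$ and checking the inductive step against the recurrence $U_n = pU_{n-1} - qU_{n-2}$; I would prefer the direct geometric-series computation above since it is shorter and makes the appearance of $\Delta = (q-1)^2$ transparent.
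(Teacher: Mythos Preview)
Your proof is correct and follows essentially the same approach as the paper: both substitute the closed form $U_i=\frac{q^i-1}{q-1}$ from Lemma~\ref{lem:U_and_V_seq_when_p_minus_q_is_1}, evaluate the resulting geometric sum, and then identify the second expression via the same formula for $U_n$. The only cosmetic differences are that the paper writes $\sum_{i=1}^n q^i$ as $\frac{q^{n+1}-1}{q-1}-1$ rather than $\frac{q^{n+1}-q}{q-1}$, and derives the second equality by factoring forward rather than working backward from $qU_n$.
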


\begin{proof}
Observe the following sequence of equalities:
\begin{align*}
\sum_{i=1}^n U_i &= \frac{1}{q-1} \cdot \sum_{i=1}^n (q^i - 1) &\text{by Equation~\eqref{eq:U_seq_when_p_minus_q_is_1} of Lemma~\ref{lem:U_and_V_seq_when_p_minus_q_is_1}}\\
    &= \frac{1}{q-1} \cdot \left( \sum_{i=1}^n q^i - \sum_{i=1}^n 1 \right)\\
    &= \frac{1}{q-1} \cdot \left( \left( \frac{q^{n+1} - 1}{q - 1} - 1 \right) - n \right)\\
    &= \frac{q^{n+1} - 1}{(q-1)^2} - \frac{n+1}{q-1}\\
    &= \frac{q^{n+1} - 1 - (n+1)(q-1)}{(q-1)^2}\\
    &= \frac{q^{n+1} - (q-1) n - q}{\Delta},
\end{align*}
where the third equality holds by a finite geometric series argument, and the last equality holds since $p - q = 1$ implies $\Delta = (q-1)^2$, proving the first equality in Equation~\eqref{eq:sum_of_n_consec_U_seq_terms}. Lastly, observe that
\begin{align*}
\frac{q^{n+1} - (q-1) n - q}{\Delta}
    &= \frac{q(q^n - 1) - (q-1) n}{(q-1)^2}\\
    &= \frac{q(q - 1)(1 + q + \cdots + q^{n-1}) - (q-1) n}{(q-1)^2}\\
    &= \frac{q(1 + q + \cdots + q^{n-1}) - n}{q-1}\\
    &= \frac{q U_n - n}{q-1},
\end{align*}
where the second equality holds by a finite geometric series argument, and the last equality holds by Equation~\eqref{eq:U_seq_when_p_minus_q_is_1} of Lemma~\ref{lem:U_and_V_seq_when_p_minus_q_is_1}, proving the second equality in Equation~\eqref{eq:sum_of_n_consec_U_seq_terms}.
\end{proof}

\begin{theorem}\label{thm:consec_sum_V_seq_when_p_minus_q_is_1}
Let $p$ and $q$ be integers such that $p - q = 1$. Then for all $n \geq 1$, we have
$$ \sum_{i=1}^n V_i  = U_{n+1} + n - 1 .$$
\end{theorem}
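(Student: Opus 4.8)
The plan is to reduce the statement to a finite geometric series by invoking the explicit formula for $V_i$ established in Lemma~\ref{lem:U_and_V_seq_when_p_minus_q_is_1}. Since $p - q = 1$, that lemma gives $V_i = q^i + 1$ for every $i \geq 0$. First I would substitute this into the sum and split it into two pieces:
$$\sum_{i=1}^n V_i = \sum_{i=1}^n (q^i + 1) = \left( \sum_{i=1}^n q^i \right) + n.$$

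Next I would identify the geometric sum $\sum_{i=1}^n q^i$ directly in terms of $U_{n+1}$, avoiding any separate evaluation of the series. The same lemma provides $U_{n+1} = 1 + q + q^2 + \cdots + q^n$, so that $\sum_{i=1}^n q^i = U_{n+1} - 1$. Substituting this into the previous display yields $\sum_{i=1}^n V_i = (U_{n+1} - 1) + n = U_{n+1} + n - 1$, which is exactly the claimed identity.

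Since this is a short computation, there is no substantive obstacle; the only point deserving care is that, unlike Theorem~\ref{thm:consec_sum_U_seq_when_p_minus_q_is_1}, the present hypothesis does \emph{not} require $q \neq 1$, so the degenerate case $(p,q) = (2,1)$ is permitted. To keep the argument uniform over all integers $q$, I would deliberately use the polynomial form $U_{n+1} = 1 + q + \cdots + q^n$ rather than the fractional closed form $\frac{q^{n+1}-1}{q-1}$, since the latter is indeterminate at $q = 1$ whereas the former, and hence the entire proof, remains valid for every $q$. As a sanity check, at $q = 1$ one has $V_i = 2$, so $\sum_{i=1}^n V_i = 2n$, matching $U_{n+1} + n - 1 = (n+1) + n - 1 = 2n$.
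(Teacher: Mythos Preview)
Your proof is correct and follows essentially the same approach as the paper: both substitute $V_i = q^i + 1$ from Lemma~\ref{lem:U_and_V_seq_when_p_minus_q_is_1}, split the sum into the geometric piece and the constant piece, and identify the geometric piece with $U_{n+1}$. Your choice to invoke the polynomial form $U_{n+1} = 1 + q + \cdots + q^n$ directly, rather than passing through $\frac{q^{n+1}-1}{q-1}$ as the paper does, is a nice touch that keeps the argument valid even at $q=1$ (which the paper's Convention~\ref{conv:nondegenerate_Lucas_sequences} excludes anyway).
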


\begin{proof}
Observe the following sequence of equalities:
\begin{align*}
\sum_{i=1}^n V_i = \sum_{i=1}^n (q^i + 1) = \left( \sum_{i=0}^n q^i \right) - 1 + \sum_{i=1}^n 1 = \frac{q^{n+1} - 1}{q-1} + n - 1 = U_{n+1} + n - 1,
\end{align*}
where the first equality holds by Equation~\eqref{eq:V_seq_when_p_minus_q_is_1} of Lemma~\ref{lem:U_and_V_seq_when_p_minus_q_is_1}, the third equality holds by a finite geometric series argument, and the last equality holds by Equation~\eqref{eq:U_seq_when_p_minus_q_is_1} of Lemma~\ref{lem:U_and_V_seq_when_p_minus_q_is_1}. Thus the result follows.
\end{proof}


\subsection{Sum of consecutive terms when \texorpdfstring{$p-q \neq 1$}{p-q is not equal to 1}}\label{subsec:consecutive_sum_when_p_minus_q_does_not_equal_1}

Closed forms for $\sum_{i=1}^n U_i$ and $\sum_{i=1}^n V_i$ were not explicitly given in Lucas' seminal 1878 paper~\cite{Lucas1878}; however, they arise as a consequence of a sequence of Lucas' deep identities culminating in his Identity~(56), which was stated incorrectly in his paper (see Appendix~\ref{app:correct_proof_for_Lucas_Identity_(56)}), but stated correctly in the following two equations:
\begin{align}
    \sum_{i=1}^n U_{ir} &= \frac{U_r + q^r U_{nr} - U_{(n+1)r}}{1 + q^r - V_r} \label{eq:Lucas_corrected_U_identity}\\
    \sum_{i=1}^n V_{ir} &= \frac{V_r + q^r V_{nr} - V_{(n+1)r} - 2q^r}{1 + q^r - V_r}. \label{eq:Lucas_corrected_V_identity}
\end{align}
Setting $r\coloneq 1$ and multiplying the numerators and denominators by $-1$ in Equations~\eqref{eq:Lucas_corrected_U_identity} and \eqref{eq:Lucas_corrected_V_identity}, respectively, Theorems~\ref{thm:consec_sum_U_seq_when_p_minus_q_is_not_equal_to_1} and \ref{thm:consec_sum_V_seq_when_p_minus_q_is_not_equal_to_1} follow. However, we give simple induction proofs for both theorems that rely solely on the recurrence relation for $\LucasUSeq$ and $\LucasVSeq$ and no identities in Lucas' paper.

\begin{theorem}\label{thm:consec_sum_U_seq_when_p_minus_q_is_not_equal_to_1}
Let $p$ and $q$ be integers such that $p - q \neq 1$. Then for all $n \geq 1$, we have
$$ \sum_{i=1}^n U_i  = \frac{U_{n+1} - q U_n - 1}{p-q-1} .$$
\end{theorem}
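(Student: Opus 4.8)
The plan is to prove the identity by induction on $n$, using only the defining recurrence $U_{n} = pU_{n-1} - qU_{n-2}$ together with the initial values $U_0 = 0$ and $U_1 = 1$, exactly as the paper advertises. For brevity write $S_n \coloneq \sum_{i=1}^n U_i$; the goal is then to establish $S_n = \frac{U_{n+1} - qU_n - 1}{p-q-1}$ for all $n \geq 1$, where the standing hypothesis $p - q \neq 1$ guarantees that the denominator $p-q-1$ is nonzero, so that every fraction in sight is well-defined.

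For the base case $n = 1$, I would compute $S_1 = U_1 = 1$ on the left, and on the right use $U_2 = pU_1 - qU_0 = p$ to obtain $\frac{U_2 - qU_1 - 1}{p-q-1} = \frac{p - q - 1}{p-q-1} = 1$. This is precisely the step where the assumption $p - q \neq 1$ is indispensable, since it is what allows the cancellation to a well-defined value of $1$.

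For the inductive step, assuming the formula holds for $n$, I would write $S_{n+1} = S_n + U_{n+1}$, substitute the inductive hypothesis, and collect everything over the common denominator $p-q-1$. The numerator becomes $U_{n+1} - qU_n - 1 + (p-q-1)U_{n+1} = (p-q)U_{n+1} - qU_n - 1$. The one substantive observation is then that $(p-q)U_{n+1} - qU_n = \bigl(pU_{n+1} - qU_n\bigr) - qU_{n+1} = U_{n+2} - qU_{n+1}$, where the final equality is just the recurrence $U_{n+2} = pU_{n+1} - qU_n$. Hence the numerator equals $U_{n+2} - qU_{n+1} - 1$, which is exactly the claimed closed form for $S_{n+1}$, completing the induction.

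I do not anticipate any genuine obstacle: the entire content of the inductive step reduces to a single application of the recurrence to recognize $U_{n+2}$ inside the simplified numerator, and the only point requiring care is the well-definedness of the denominator, which the hypothesis $p - q \neq 1$ handles. As an aside, one could instead sum the recurrence $U_i = pU_{i-1} - qU_{i-2}$ over $i$ and solve the resulting telescoping relation for $S_n$, but the induction above is cleaner and keeps the argument self-contained, independent of the Lucas identities referenced before the statement.
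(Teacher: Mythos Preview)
Your proof is correct and follows essentially the same approach as the paper's: induction on $n$, with the base case verified via $U_2 = p$ and the inductive step reduced to a single application of the recurrence $U_{n+2} = pU_{n+1} - qU_n$ after combining over the common denominator $p-q-1$. The only cosmetic difference is that you collapse $U_{n+1} + (p-q-1)U_{n+1}$ directly to $(p-q)U_{n+1}$ before invoking the recurrence, whereas the paper expands and regroups term by term.
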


\begin{proof}
We induct on $n$. For $n =1$, it follows that
\begin{align*}
\sum_{i=1}^1 U_i = U_1 = \frac{p U_1 - q U_1 - 1}{p - q - 1} = \frac{U_2 - q U_1 -1}{p - q - 1},
\end{align*}
where the second equality holds since $U_1 = 1$, and the third equality holds since $U_0 = 0$ along with the recurrence $U_2 = p U_1 - q U_0$ imply that $U_2 = p U_1$. Now assume that for some $k \geq 1$, we have $\sum_{i=1}^k U_i  = \frac{U_{k+1} - q U_k - 1}{p-q-1}$, and observe the following sequence of equalities:
\begin{align*}
\sum_{i=1}^{k+1} U_i &= \left( \sum_{i=1}^k U_i \right ) + U_{k+1}\\
    &= \frac{U_{k+1} - q U_k - 1}{p-q-1} + U_{k+1}\\
    &= \frac{U_{k+1} - q U_k - 1 + (p - q - 1)U_{k+1}}{p-q-1}\\
    &= \frac{U_{k+1} - q U_k - 1 + p U_{k+1} - q U_{k+1} - U_{k+1}}{p-q-1}\\
    &= \frac{ (p U_{k+1} - q U_k) - q U_{k+1} - 1}{p-q-1}\\
    &= \frac{ U_{k+2} - q U_{k+1} - 1}{p-q-1},
\end{align*}
where the second equality holds by the induction hypothesis, and the last equality holds by the recurrence for $\LucasUSeq$. Thus the result follows.
\end{proof}

\begin{theorem}\label{thm:consec_sum_V_seq_when_p_minus_q_is_not_equal_to_1}
Let $p$ and $q$ be integers such that $p - q \neq 1$. Then for all $n \geq 1$, we have
$$ \sum_{i=1}^n V_i  = \frac{V_{n+1} - q V_n - p + 2q}{p-q-1} .$$
\end{theorem}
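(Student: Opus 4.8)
The plan is to prove the identity by induction on $n$, exactly paralleling the proof of Theorem~\ref{thm:consec_sum_U_seq_when_p_minus_q_is_not_equal_to_1}, relying only on the recurrence $V_n = p V_{n-1} - q V_{n-2}$ and the initial conditions $V_0 = 2$ and $V_1 = p$. The hypothesis $p - q \neq 1$ guarantees that the denominator $p - q - 1$ is nonzero, so the stated closed form is well defined throughout.

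For the base case $n = 1$, I would compute the right-hand side directly. Using $V_1 = p$ and the recurrence $V_2 = p V_1 - q V_0 = p^2 - 2q$, the numerator becomes $V_2 - q V_1 - p + 2q = p^2 - 2q - pq - p + 2q = p(p - q - 1)$. Dividing by $p - q - 1$ yields $p = V_1 = \sum_{i=1}^1 V_i$, establishing the base case. The only mildly delicate point here is carefully tracking the constant term $-p + 2q$ and confirming the cancellation of the $\pm 2q$ contributions.

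For the inductive step, assume the formula holds for some $k \geq 1$. Then I would write $\sum_{i=1}^{k+1} V_i = \frac{V_{k+1} - q V_k - p + 2q}{p-q-1} + V_{k+1}$ and place everything over the common denominator $p - q - 1$, giving a numerator of $V_{k+1} - q V_k - p + 2q + (p - q - 1) V_{k+1}$. Expanding $(p-q-1)V_{k+1}$ and cancelling the $V_{k+1} - V_{k+1}$ terms leaves $p V_{k+1} - q V_k - q V_{k+1} - p + 2q$. The key move is recognizing $p V_{k+1} - q V_k = V_{k+2}$ by the recurrence for $\LucasVSeq$, so the numerator collapses to $V_{k+2} - q V_{k+1} - p + 2q$, which is exactly the claimed form with $n = k+1$.

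I do not anticipate a genuine obstacle: the argument is structurally identical to the $U$-sequence case, differing only in the additive constant $-p + 2q$ (versus $-1$ there), which traces back to $V_0 = 2$ and $V_1 = p$ rather than $U_0 = 0$ and $U_1 = 1$. The single place demanding care is the bookkeeping of this constant, both in verifying the base case and in confirming that it survives the inductive step unchanged, since the recurrence only rearranges the $V$-terms and leaves $-p + 2q$ intact. As an alternative, one could instead specialize Lucas' corrected Identity~\eqref{eq:Lucas_corrected_V_identity} at $r = 1$ and negate numerator and denominator, but the self-contained induction is cleaner and avoids invoking the deeper machinery of Lucas' paper.
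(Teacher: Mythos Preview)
Your proposal is correct and mirrors the paper's own proof essentially line by line: both argue by induction on $n$, verify the base case $n=1$ via $V_2 = p^2 - 2q$, and in the inductive step place everything over the common denominator $p-q-1$ before invoking the recurrence $V_{k+2} = pV_{k+1} - qV_k$. Your closing remark about the alternative via Equation~\eqref{eq:Lucas_corrected_V_identity} at $r=1$ also matches the paper's own commentary preceding the theorem.
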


\begin{proof}
We induct on $n$. For $n =1$, it follows that
\begin{align*}
\sum_{i=1}^1 V_i = V_1 = p \cdot \frac{p - q - 1}{p - q - 1} = \frac{p^2 - pq - p}{p - q - 1} = \frac{(p^2 - 2q) - pq - p + 2q}{p - q - 1} = \frac{V_2 - q V_1 - p + 2q}{p - q - 1},
\end{align*}
where the last equality holds since $V_1 = p$ and $V_2 = p^2 - 2q$. Now assume that for some $k \geq 1$, we have $\sum_{i=1}^k V_i  = \frac{V_{k+1} - q V_k - p + 2q}{p-q-1}$, and observe the following sequence of equalities:
\begin{align*}
\sum_{i=1}^{k+1} V_i &= \left( \sum_{i=1}^k V_i \right ) + V_{k+1}\\
    &= \frac{V_{k+1} - q V_k - p + 2q}{p-q-1} + V_{k+1}\\
    &= \frac{V_{k+1} - q V_k - p + 2q + (p - q - 1) V_{k+1}}{p - q - 1}\\
    &= \frac{V_{k+1} - q V_k - p + 2q + p V_{k+1} - q V_{k+1} - V_{k+1}}{p - q - 1}\\
    &= \frac{(p V_{k+1} - q V_k) - q V_{k+1} - p + 2q}{p - q - 1}\\
    &= \frac{V_{k+2} - q V_{k+1} - p + 2q}{p - q - 1},
\end{align*}
where the second equality holds by the induction hypothesis, and the last equality holds by the recurrence for $\LucasVSeq$. Thus the result follows.
\end{proof}



\section{Main results for the weighted sums}\label{sec:main_results_weighted_sums}

A main tool we use in this section is Abel's summation by parts formula, which transforms a finite sum $\sum_{i=1}^n a_i b_i$ of products of two sequences $(a_i)_{i \geq 1}$ and $(b_i)_{i \geq 1}$ into a sum of  simpler summations. In our case, these simpler summations are the consecutive sums we derived already in Section~\ref{sec:main_results_consecutive_sums}. As there are a variety of forms in which Abel's formula can be presented, we state the version of the formula that we employ.

\begin{lemma}[Abel]\label{lem:Abel_summation}
Let $(a_i)_{i \geq 1}$ and $(b_i)_{i \geq 1}$ be sequences of real numbers. It follows that
$$ \sum_{i=1}^n a_i b_i = a_n \cdot \sum_{i=1}^n b_i + \sum_{i=1}^{n-1} \Bigl( (a_i - a_{i+1}) \cdot \sum_{j=1}^i b_j \Bigr).$$
\end{lemma}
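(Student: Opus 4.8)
The plan is to prove the identity by introducing the partial sums of the sequence $(b_i)$ and exploiting a telescoping structure, which is the classical route to Abel summation. I would define $B_i \coloneq \sum_{j=1}^i b_j$ for $i \geq 1$, together with the boundary convention $B_0 \coloneq 0$. The purpose of this notation is that each term of $(b_i)$ is recovered as a consecutive difference, $b_i = B_i - B_{i-1}$, valid for all $i \geq 1$, where the case $i = 1$ relies precisely on $B_0 = 0$.

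First I would substitute $b_i = B_i - B_{i-1}$ into the left-hand side and distribute:
\[
\sum_{i=1}^n a_i b_i = \sum_{i=1}^n a_i (B_i - B_{i-1}) = \sum_{i=1}^n a_i B_i - \sum_{i=1}^n a_i B_{i-1}.
\]
Next I would reindex the second sum by setting $j = i - 1$, so that $\sum_{i=1}^n a_i B_{i-1} = \sum_{j=0}^{n-1} a_{j+1} B_j$, in which the $j = 0$ term vanishes because $B_0 = 0$, leaving $\sum_{j=1}^{n-1} a_{j+1} B_j$. Then I would peel off the top term $a_n B_n$ from the first sum and combine what remains over the common range $1 \leq i \leq n-1$, obtaining
\[
\sum_{i=1}^n a_i b_i = a_n B_n + \sum_{i=1}^{n-1} (a_i - a_{i+1}) B_i.
\]
Unwinding the definition of $B_i$ then recovers exactly the claimed right-hand side, since $B_n = \sum_{i=1}^n b_i$ and $B_i = \sum_{j=1}^i b_j$.

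There is essentially no deep obstacle here; the only point requiring care is the bookkeeping at the two ends of the summation, namely the index shift in the second sum and the separation of the $a_n B_n$ term, where an off-by-one slip would corrupt the final formula. The convention $B_0 = 0$ is exactly what legitimizes both the identity $b_1 = B_1 - B_0$ and the silent discarding of the $j = 0$ term after reindexing, so I would flag it explicitly. A completely routine alternative would be induction on $n$: the base case $n = 1$ is immediate, and the inductive step adds $a_{n+1} b_{n+1}$ and rearranges using $\sum_{i=1}^{n+1} b_i = \bigl( \sum_{i=1}^n b_i \bigr) + b_{n+1}$. Since the telescoping argument above is shorter and more transparent, I would present that one rather than the induction.
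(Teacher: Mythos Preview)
Your proof is correct and is exactly the standard telescoping argument for Abel's summation formula. The paper itself does not supply a proof of this lemma---it simply states it as a known result attributed to Abel and uses it as a tool---so there is nothing to compare against, but your write-up would serve perfectly well as a proof were one needed.
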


To simplify the closed forms, we often employ the following pleasing combinatorial identity, bearing a myriad of straight-forward proofs, which we leave to the motivated reader.

\begin{lemma}\label{lem:cute_combinatorial_identity}
Let $n \geq 1$ be given. Then the following identity holds:
$$ n^2 = \binom{n}{2} + \binom{n+1}{2}. $$
\end{lemma}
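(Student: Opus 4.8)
The plan is to give the most direct algebraic verification, since the identity is an equality of polynomials in $n$ and both sides admit an immediate closed form. First I would expand each binomial coefficient using the definition $\binom{m}{2} = \frac{m(m-1)}{2}$, obtaining $\binom{n}{2} = \frac{n(n-1)}{2}$ and $\binom{n+1}{2} = \frac{(n+1)n}{2}$. Adding these over the common denominator $2$ yields $\frac{n\bigl((n-1)+(n+1)\bigr)}{2} = \frac{n \cdot 2n}{2} = n^2$, which is exactly the right-hand side. This requires nothing beyond the binomial formula and a single factorization of $n$ from the numerator.

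An equivalent route, which I would mention as one of the ``combinatorial'' proofs hinted at in the statement, is to invoke Pascal's rule. Writing $\binom{n+1}{2} = \binom{n}{2} + \binom{n}{1}$ and recalling $\binom{n}{1} = n$, the sum collapses to $2\binom{n}{2} + n = n(n-1) + n = n^2$. Alternatively, one can count the cells of an $n \times n$ grid: the $\binom{n+1}{2}$ cells on or above the main diagonal together with the $\binom{n}{2}$ cells strictly below it partition all $n^2$ cells, giving the identity bijectively.

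Either argument is valid for every integer $n \geq 1$ (indeed for all $n \geq 0$ under the convention $\binom{0}{2}=0$), so no case analysis or induction is needed. The main ``obstacle'' is essentially absent: the only point demanding the slightest care is the factorization of $n$ out of $(n-1)+(n+1)$ in the algebraic proof, or the correct accounting of the diagonal cells in the combinatorial proof. I would present the one-line algebraic computation as the official proof and relegate the grid picture to a parenthetical remark, consistent with the statement's invitation to leave the details to the reader.
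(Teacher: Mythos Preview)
Your proof is correct; the paper itself gives no proof at all, explicitly stating that the identity ``bear[s] a myriad of straight-forward proofs, which we leave to the motivated reader.'' Your algebraic expansion (and the alternative via Pascal's rule or the grid partition) is exactly the kind of argument the author invites the reader to supply.
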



\subsection{Weighted sum of consecutive terms when \texorpdfstring{$p-q=1$}{p-q=1}}\label{subsec:weighted_sum_when_p_minus_q_equals_1}

\begin{theorem}\label{thm:weighted_sum_U_seq_when_p_minus_q_is_1}
Let $p$ and $q$ be integers such that $q \neq 1$ and $p - q = 1$. Then for all $n \geq 1$, we have
\begin{align}
\sum_{i=1}^n i U_i = \frac{q}{q-1} \biggl( n U_n - \frac{q U_{n-1} - n + 1}{q-1} - \frac{1}{q} \cdot \binom{n+1}{2} \biggr).
\end{align}
\end{theorem}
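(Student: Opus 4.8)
The plan is to apply Abel's summation by parts (Lemma~\ref{lem:Abel_summation}) with the weight sequence $a_i = i$ and $b_i = U_i$. The decisive feature of this choice is that the forward difference of the weights collapses, since $a_i - a_{i+1} = i - (i+1) = -1$. Abel's formula then reduces the weighted sum to
$$ \sum_{i=1}^n i U_i = n \sum_{i=1}^n U_i - \sum_{i=1}^{n-1} \Bigl( \sum_{j=1}^i U_j \Bigr), $$
so the entire problem is re-expressed in terms of the consecutive sums already computed in Theorem~\ref{thm:consec_sum_U_seq_when_p_minus_q_is_1}. This is exactly the philosophy advertised in the preamble to Section~\ref{sec:main_results_weighted_sums}: Abel's method turns the weighted sum into an outer term plus a single nested sum, both of which we can evaluate in closed form.

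Next I would substitute the closed form $\sum_{j=1}^i U_j = \frac{q U_i - i}{q-1}$ from Theorem~\ref{thm:consec_sum_U_seq_when_p_minus_q_is_1}. The outer term $n \sum_{i=1}^n U_i$ becomes $\frac{n(q U_n - n)}{q-1}$ immediately. For the nested double sum, I would write $\sum_{i=1}^{n-1} \frac{q U_i - i}{q-1} = \frac{1}{q-1}\bigl( q \sum_{i=1}^{n-1} U_i - \sum_{i=1}^{n-1} i \bigr)$ and then evaluate each piece separately: the first inner sum is again handled by Theorem~\ref{thm:consec_sum_U_seq_when_p_minus_q_is_1} (now at index $n-1$, giving $\frac{q U_{n-1} - (n-1)}{q-1}$), while the second is simply the triangular number $\sum_{i=1}^{n-1} i = \binom{n}{2}$. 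Assembling these produces a middle term proportional to $\frac{q(q U_{n-1} - n + 1)}{(q-1)^2}$, which already matches the corresponding term in the target expression, so no further manipulation of the $U_{n-1}$ piece is needed.

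The final step is to reconcile the remaining purely polynomial terms with the binomial coefficient $\binom{n+1}{2}$ appearing in the statement. After collecting, the leftover contributions are $-\frac{n^2}{q-1} + \frac{1}{q-1}\binom{n}{2}$, and I would invoke Lemma~\ref{lem:cute_combinatorial_identity} in the form $n^2 = \binom{n}{2} + \binom{n+1}{2}$ to rewrite this combination as exactly $-\frac{1}{q-1}\binom{n+1}{2}$. Factoring $\frac{q}{q-1}$ out of the three surviving terms then recovers the claimed closed form verbatim. I expect the main obstacle to be purely organizational rather than conceptual: keeping the two tiers of denominators ($q-1$ versus $(q-1)^2$) straight while merging the outer and nested contributions, and in particular recognizing the precise point at which the combinatorial identity slots in to convert the stray $n^2$ into $\binom{n+1}{2}$. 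The restriction $q \neq 1$ is exactly what keeps every denominator nonzero throughout, consistent with the hypothesis of the theorem.
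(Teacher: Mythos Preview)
Your proposal is correct and follows essentially the same route as the paper: Abel summation with $a_i=i$, $b_i=U_i$, two invocations of Theorem~\ref{thm:consec_sum_U_seq_when_p_minus_q_is_1} (at $n$ and at $n-1$), and Lemma~\ref{lem:cute_combinatorial_identity} to convert $\binom{n}{2}-n^2$ into $-\binom{n+1}{2}$ before factoring out $\tfrac{q}{q-1}$. The only small point you might add is that the final factorization also uses $q\neq 0$, which the paper justifies via the nondegeneracy convention excluding $(p,q)=(1,0)$.
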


\begin{proof}
Setting $a_i \coloneq i$ and $b_i \coloneq U_i$ for all $1 \leq i \leq n$ in Lemma~\ref{lem:Abel_summation}, we get the following:
\begin{align*}
\sum_{i=1}^n i U_i &= n \cdot \sum_{i=1}^n U_i + \sum_{i=1}^{n-1} \Bigl( (i - (i+1)) \cdot \sum_{j=1}^i U_j \Bigr)\\
    &= n \cdot \frac{q U_n - n}{q-1} + \sum_{i=1}^{n-1} (-1) \cdot \frac{q U_i - i}{q-1} &\text{by Theorem~\ref{thm:consec_sum_U_seq_when_p_minus_q_is_1}}\\
    &= \frac{1}{q-1} \biggl( n q U_n - n^2 - q \sum_{i=1}^{n-1} U_i + \sum_{i=1}^{n-1} i \biggr)\\
    &= \frac{1}{q-1} \biggl( n q U_n - n^2 - q \cdot \frac{q U_{n-1} - (n - 1)}{q-1} +  \sum_{i=1}^{n-1} i \biggr) &\text{by Theorem~\ref{thm:consec_sum_U_seq_when_p_minus_q_is_1}}\\
    &= \frac{1}{q-1} \biggl( n q U_n - q \cdot \frac{q U_{n-1} - n + 1}{q-1} +  \binom{n}{2} - n^2 \biggr)\\
    &= \frac{1}{q-1} \biggl( n q U_n - q \cdot \frac{q U_{n-1} - n + 1}{q-1} - \binom{n+1}{2} \biggr)\\
    &= \frac{q}{q-1} \biggl( n U_n - \frac{q U_{n-1} - n + 1}{q-1} - \frac{1}{q} \cdot \binom{n+1}{2} \biggr),
\end{align*}
where the second to last equality holds by Lemma~\ref{lem:cute_combinatorial_identity}, and the last equality holds since $(p,q) = (1,0)$ yields a degenerate sequence $\LucasFormalUSeq$ (recall Convention~\ref{conv:nondegenerate_Lucas_sequences}) and hence $q$ is nonzero. Thus the result follows.
\end{proof}

By utilizing the Binet form for $U_n$ (when $p-q=1$) given in Lemma~\ref{lem:U_and_V_seq_when_p_minus_q_is_1} and some algebraic simplifications, we can rewrite the closed form for $\sum_{i=1}^n i U_i$ alternately as follows.

\begin{corollary}\label{cor:weighted_sum_U_seq_when_p_minus_q_is_1}
Let $p$ and $q$ be integers such that $p - q = 1$. Then for all $n \geq 1$, we have
$$ \sum_{i=1}^n i U_i  =  \frac{q}{q-1} \biggl( \frac{q^n (nq-n-1) + 1}{\Delta} - \frac{1}{q} \binom{n+1}{2} \biggr), $$
where $\Delta = (q-1)^2$ is the discriminant of the characteristic polynomial $x^2 - px + q$.
\end{corollary}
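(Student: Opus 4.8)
The plan is to start from the closed form already established in Theorem~\ref{thm:weighted_sum_U_seq_when_p_minus_q_is_1}, namely
\[
\sum_{i=1}^n i U_i = \frac{q}{q-1} \biggl( n U_n - \frac{q U_{n-1} - n + 1}{q-1} - \frac{1}{q} \cdot \binom{n+1}{2} \biggr),
\]
and simply substitute the Binet-type formula for $U_n$ valid in the regime $p-q=1$. Since the corollary merely asserts an \emph{equivalent} closed form, no new combinatorial or inductive machinery is needed; the whole task reduces to an algebraic simplification of the bracketed expression.

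**Key steps.** First I would invoke Equation~\eqref{eq:U_seq_when_p_minus_q_is_1} of Lemma~\ref{lem:U_and_V_seq_when_p_minus_q_is_1}, which gives $U_n = \frac{q^n - 1}{q-1}$ and $U_{n-1} = \frac{q^{n-1} - 1}{q-1}$. Substituting these into the two terms $n U_n$ and $-\frac{q U_{n-1} - n + 1}{q-1}$ inside the large parentheses, I would combine everything over the common denominator $(q-1)^2 = \Delta$. Concretely, $n U_n = \frac{n(q^n - 1)}{q-1} = \frac{n(q^n-1)(q-1)}{\Delta}$, while $q U_{n-1} = \frac{q(q^{n-1}-1)}{q-1} = \frac{q^n - q}{q-1}$, so that $\frac{q U_{n-1} - n + 1}{q-1} = \frac{q^n - q - (n-1)(q-1)}{\Delta}$. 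The plan is then to collect the numerators over $\Delta$ and show the combined numerator equals $q^n(nq - n - 1) + 1$; the $\binom{n+1}{2}$ term is untouched and rides along outside. Expanding $n(q^n-1)(q-1) = nq^{n+1} - nq^n - nq + n$ and subtracting $q^n - q - (n-1)(q-1) = q^n - q - (n-1)q + (n-1) = q^n - nq + (n-1)$ gives numerator $nq^{n+1} - nq^n - nq + n - q^n + nq - (n-1) = nq^{n+1} - (n+1)q^n + 1 = q^n(nq - n - 1) + 1$, exactly as claimed.

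**Main obstacle.** There is no genuine obstacle here beyond careful bookkeeping: the only subtlety is tracking the single overall factor $\frac{q}{q-1}$ and ensuring the $\binom{n+1}{2}$ summand is carried through the substitution unchanged while the rest is consolidated over $\Delta = (q-1)^2$. One should note that the hypothesis has been relaxed to merely $p - q = 1$ (dropping the $q \neq 1$ restriction from the theorem), so I would remark that the displayed Binet form from Lemma~\ref{lem:U_and_V_seq_when_p_minus_q_is_1} holds for all such $p,q$, and that the resulting expression is interpreted with the same implicit nondegeneracy convention governing the appearance of $\Delta$ in the denominator. With the numerator identity above verified, the corollary follows immediately.
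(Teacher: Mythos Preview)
Your proposal is correct and follows essentially the same approach as the paper: the paper's proof also reduces the corollary to verifying the identity $n U_n - \frac{q U_{n-1} - n + 1}{q-1} = \frac{q^n (nq-n-1) + 1}{\Delta}$ by substituting $U_n = \frac{q^n-1}{q-1}$ from Lemma~\ref{lem:U_and_V_seq_when_p_minus_q_is_1} and combining over the common denominator $(q-1)^2$. Your numerator computation matches the paper's line by line.
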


\begin{proof}
It suffices to show that $n U_n - \frac{q U_{n-1} - n + 1}{q-1} = \frac{q^n (nq-n-1) + 1}{\Delta}$. By Lemma~\ref{lem:U_and_V_seq_when_p_minus_q_is_1}, we have
\begin{align*}
n U_n - \frac{q U_{n-1} - n + 1}{q-1} &= n \cdot \frac{q^n - 1}{q-1} - \frac{q \left( \frac{q^{n-1} - 1}{q-1} \right) - (n - 1) \cdot \frac{q-1}{q-1}}{q-1}\\
    &= \frac{n (q^n - 1) (q-1)}{(q-1)^2} - \frac{q^n - nq + n - 1}{(q-1)^2}\\
    &= \frac{n (q^{n+1} - q^n - q + 1) - (q^n - nq + n - 1)}{\Delta}\\
    &= \frac{n q^{n+1} - n q^n - q^n + 1}{\Delta}\\
    &= \frac{q^n (nq-n-1) + 1}{\Delta},
\end{align*}
and thus the result follows.
\end{proof}

\begin{theorem}\label{thm:weighted_sum_V_seq_when_p_minus_q_is_1}
Let $p$ and $q$ be integers such that $p - q = 1$. Then for all $n \geq 1$, we have
$$ \sum_{i=1}^n i V_i  = n U_{n+1} - \frac{q U_n - n}{q-1} + \binom{n+1}{2} .$$
\end{theorem}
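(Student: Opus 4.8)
The plan is to mirror the proof of Theorem~\ref{thm:weighted_sum_U_seq_when_p_minus_q_is_1} and attack the sum directly with Abel's summation by parts (Lemma~\ref{lem:Abel_summation}), taking $a_i \coloneq i$ and $b_i \coloneq V_i$ for $1 \le i \le n$. Since $a_i - a_{i+1} = i - (i+1) = -1$, the general Abel formula collapses to the pleasant shape
$$ \sum_{i=1}^n i V_i = n \cdot \sum_{i=1}^n V_i - \sum_{i=1}^{n-1} \sum_{j=1}^i V_j. $$
The point of this maneuver is that both the leading term and the inner partial sums are consecutive sums of $V$-terms, for which we already have the closed form $\sum_{i=1}^m V_i = U_{m+1} + m - 1$ from Theorem~\ref{thm:consec_sum_V_seq_when_p_minus_q_is_1}. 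So the next step is simply to substitute this in twice, giving $n(U_{n+1} + n - 1)$ for the first piece and $-\sum_{i=1}^{n-1}\bigl(U_{i+1} + i - 1\bigr)$ for the second.

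First I would evaluate the three pieces of $\sum_{i=1}^{n-1}(U_{i+1} + i - 1)$ separately. The arithmetic parts are immediate: $\sum_{i=1}^{n-1} i = \binom{n}{2}$ and $\sum_{i=1}^{n-1}(-1) = -(n-1)$. The one genuinely delicate piece is the shifted sum $\sum_{i=1}^{n-1} U_{i+1}$, where a reindex $j = i+1$ turns it into $\sum_{j=2}^{n} U_j = \bigl(\sum_{j=1}^n U_j\bigr) - U_1$. Here I would invoke Theorem~\ref{thm:consec_sum_U_seq_when_p_minus_q_is_1} to replace $\sum_{j=1}^n U_j$ by $\frac{qU_n - n}{q-1}$ and use $U_1 = 1$, obtaining $\frac{qU_n - n}{q-1} - 1$. (This is exactly where the hypothesis $q \ne 1$ quietly enters: the closed form $\frac{qU_n-n}{q-1}$ is indeterminate at $q=1$, but $(p,q)=(2,1)$ is the degenerate case excluded by Convention~\ref{conv:nondegenerate_Lucas_sequences}, so the formula is legitimate under our standing assumptions.)

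Assembling everything, the expression becomes
$$ \sum_{i=1}^n i V_i = n U_{n+1} + n^2 - n - \frac{qU_n - n}{q-1} - \binom{n}{2} + n = n U_{n+1} + n^2 - \frac{qU_n - n}{q-1} - \binom{n}{2}, $$
after the two stray $\pm n$ constants cancel. The final simplification is to recognize $n^2 - \binom{n}{2} = \binom{n+1}{2}$, which is precisely the combinatorial identity of Lemma~\ref{lem:cute_combinatorial_identity}, yielding the claimed closed form. I do not anticipate any conceptual obstacle here; the only real risk is clerical. The main hazard is the bookkeeping around the index shift $\sum_{i=1}^{n-1} U_{i+1}$ versus $\sum_{i=1}^{n} U_i$ and keeping the several additive constants ($-1$, $-(n-1)$, and the two copies of $n$) straight, so I would track these carefully and lean on Lemma~\ref{lem:cute_combinatorial_identity} at the end rather than trying to force the binomials into line by hand.
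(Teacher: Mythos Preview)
Your proposal is correct and follows essentially the same route as the paper's proof: Abel summation with $a_i=i$, $b_i=V_i$, two applications of Theorem~\ref{thm:consec_sum_V_seq_when_p_minus_q_is_1}, the reindex $\sum_{i=1}^{n-1}U_{i+1}=\sum_{i=1}^n U_i - U_1$ followed by Theorem~\ref{thm:consec_sum_U_seq_when_p_minus_q_is_1}, and the final clean-up via Lemma~\ref{lem:cute_combinatorial_identity}. Your remark that the $q\ne1$ restriction is absorbed by the nondegeneracy convention is a nice touch the paper leaves implicit.
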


\begin{proof}
Setting $a_i \coloneq i$ and $b_i \coloneq U_i$ for all $1 \leq i \leq n$ in Lemma~\ref{lem:Abel_summation}, we get the following:
\begin{align*}
\sum_{i=1}^n i V_i &= n \cdot \sum_{i=1}^n V_i + \sum_{i=1}^{n-1} \Bigl( (i - (i+1)) \cdot \sum_{j=1}^i V_j \Bigr)\\
    &= n (U_{n+1} + n - 1) + \sum_{i=1}^{n-1} (-1) \cdot (U_{i+1} + i - 1) &\text{by Theorem~\ref{thm:consec_sum_V_seq_when_p_minus_q_is_1}}\\
    &= n U_{n+1} + n^2 - n - \sum_{i=1}^{n-1} U_{i+1} - \sum_{i=1}^{n-1} i + \sum_{i=1}^{n-1} 1\\
    &= n U_{n+1} + n^2 - n - \sum_{i=1}^n U_i + U_1 - \binom{n}{2} + n - 1\\
    &= n U_{n+1} + n^2 - \frac{q U_n - n}{q-1} - \binom{n}{2} &\text{by Theorem~\ref{thm:consec_sum_U_seq_when_p_minus_q_is_1}}\\
    &= n U_{n+1} - \frac{q U_n - n}{q-1} + \binom{n+1}{2},
\end{align*}
where the second to last equality holds since $U_1 = 1$, and the last equality holds by Lemma~\ref{lem:cute_combinatorial_identity}. Thus the result follows.
\end{proof}

By utilizing the Binet form for $U_n$ (when $p-q=1$) given in Lemma~\ref{lem:U_and_V_seq_when_p_minus_q_is_1} and some algebraic simplifications, we can rewrite the closed form for $\sum_{i=1}^n i V_i$ alternately as follows.

\begin{corollary}\label{cor:weighted_sum_V_seq_when_p_minus_q_is_1}
Let $p$ and $q$ be integers such that $p - q = 1$. Then for all $n \geq 1$, we have
$$ \sum_{i=1}^n i V_i  = \frac{q^{n+1} (nq-n-1)+q}{\Delta} + \binom{n+1}{2}, $$
where $\Delta = (q-1)^2$ is the discriminant of the characteristic polynomial $x^2 - px + q$.
\end{corollary}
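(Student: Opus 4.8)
The plan is to derive this corollary directly from Theorem~\ref{thm:weighted_sum_V_seq_when_p_minus_q_is_1} rather than redoing the Abel summation, exactly as Corollary~\ref{cor:weighted_sum_U_seq_when_p_minus_q_is_1} was obtained from Theorem~\ref{thm:weighted_sum_U_seq_when_p_minus_q_is_1}. Since Theorem~\ref{thm:weighted_sum_V_seq_when_p_minus_q_is_1} already gives $\sum_{i=1}^n i V_i = n U_{n+1} - \frac{q U_n - n}{q-1} + \binom{n+1}{2}$, and the target closed form carries the same $\binom{n+1}{2}$ summand, it suffices to prove the single algebraic identity $n U_{n+1} - \frac{q U_n - n}{q-1} = \frac{q^{n+1}(nq-n-1)+q}{\Delta}$.

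To establish this identity, I would substitute the Binet-type expressions $U_{n+1} = \frac{q^{n+1}-1}{q-1}$ and $U_n = \frac{q^n-1}{q-1}$ supplied by Equation~\eqref{eq:U_seq_when_p_minus_q_is_1} of Lemma~\ref{lem:U_and_V_seq_when_p_minus_q_is_1}. Placing both terms over the common denominator $(q-1)^2 = \Delta$, the first term becomes $\frac{n(q^{n+1}-1)(q-1)}{\Delta}$ and the second becomes $\frac{q^{n+1}-q-n(q-1)}{\Delta}$. The remaining work is to expand the numerator $n(q^{n+1}-1)(q-1) - \bigl(q^{n+1}-q-n(q-1)\bigr)$, observe that the terms $-nq$ and $+nq$ as well as $+n$ and $-n$ cancel, and collect the surviving monomials into $n q^{n+2} - (n+1)q^{n+1} + q$, which factors as $q^{n+1}(nq-n-1)+q$.

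This is entirely a routine computation with no genuine obstacle; the only place to be careful is tracking the $(q-1)$ factors so that everything lands over $\Delta = (q-1)^2$ and confirming that the linear-in-$n$ terms cancel exactly. Once the displayed identity is verified, adding $\binom{n+1}{2}$ to both sides and invoking Theorem~\ref{thm:weighted_sum_V_seq_when_p_minus_q_is_1} yields the claimed closed form, completing the proof.
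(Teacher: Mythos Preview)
Your proposal is correct and follows essentially the same approach as the paper's own proof: both reduce the corollary to the identity $n U_{n+1} - \frac{q U_n - n}{q-1} = \frac{q^{n+1}(nq-n-1)+q}{\Delta}$, substitute the Binet forms from Lemma~\ref{lem:U_and_V_seq_when_p_minus_q_is_1}, clear to the common denominator $\Delta=(q-1)^2$, and observe the same cancellations yielding $nq^{n+2}-(n+1)q^{n+1}+q$ in the numerator.
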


\begin{proof}
It suffices to show that $n U_{n+1} - \frac{q U_n - n}{q-1} = \frac{q^{n+1} (nq-n-1)+q}{\Delta}$. By Lemma~\ref{lem:U_and_V_seq_when_p_minus_q_is_1}, we have
\begin{align*}
n U_{n+1} - \frac{q U_n - n}{q-1} &= n \cdot \frac{q^{n+1} - 1}{q-1} - \frac{q \left( \frac{q^n - 1}{q-1} \right) - n \cdot \frac{q-1}{q-1}}{q-1}\\
    &= \frac{n (q^{n+1} - 1) (q-1)}{(q-1)^2} - \frac{q^{n+1} - q - nq + n}{(q-1)^2}\\
    &= \frac{1}{\Delta} \Bigl( n (q^{n+2} - q^{n+1} - q + 1 ) - q^{n+1} + q + nq - n \Bigr)\\
    &= \frac{1}{\Delta} \Bigl( n q^{n+2} - (n+1) q^{n+1} + q \Bigr)\\
    &= \frac{q^{n+1} (nq - n - 1) + q}{\Delta},
\end{align*}
and thus the result follows.
\end{proof}


\subsection{Weighted sum of consecutive terms when \texorpdfstring{$p-q \neq 1$}{p-q is not equal to 1}}\label{subsec:weighted_sum_when_p_minus_q_does_not_equal_1}

\begin{theorem}\label{thm:weighted_sum_U_seq_when_p_minus_q_is_not_equal_to_1}
Let $p$ and $q$ be integers such that $p - q \neq 1$. Then for all $n \geq 1$, we have
$$ \sum_{i=1}^n i U_i  = \frac{1}{p-q-1} \Bigl( n(U_{n+1} - q U_n) - \frac{\Omega(n)}{p-q-1} \Bigr), $$
where $\Omega(n) = U_{n+1} - 2q U_n + q^2 U_{n-1} + q - 1$.
\end{theorem}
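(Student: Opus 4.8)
The plan is to mirror the proof of Theorem~\ref{thm:weighted_sum_U_seq_when_p_minus_q_is_1} and invoke Abel's summation by parts (Lemma~\ref{lem:Abel_summation}) with $a_i \coloneq i$ and $b_i \coloneq U_i$. Since $a_i - a_{i+1} = -1$, the telescoping weight collapses the Abel double sum, giving
$$\sum_{i=1}^n i U_i = n \sum_{i=1}^n U_i - \sum_{i=1}^{n-1}\Bigl(\sum_{j=1}^i U_j\Bigr).$$
Crucially, the $p-q\neq 1$ hypothesis lets me replace every consecutive sum appearing here by the closed form of Theorem~\ref{thm:consec_sum_U_seq_when_p_minus_q_is_not_equal_to_1}, namely $\sum_{j=1}^i U_j = \frac{U_{i+1}-qU_i-1}{p-q-1}$. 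Substituting this for both the outer sum and each inner partial sum factors out $\frac{1}{p-q-1}$ and reduces the right-hand side to $n(U_{n+1}-qU_n-1)$ minus the residual sum $\sum_{i=1}^{n-1}(U_{i+1}-qU_i-1)$, all over $p-q-1$.

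Next I would evaluate that residual sum by splitting it as $\sum_{i=1}^{n-1}U_{i+1} - q\sum_{i=1}^{n-1}U_i - (n-1)$. The first piece is an index shift, $\sum_{i=1}^{n-1}U_{i+1} = \sum_{i=1}^{n}U_i - U_1$, so both remaining sums are once more supplied by Theorem~\ref{thm:consec_sum_U_seq_when_p_minus_q_is_not_equal_to_1} (the second one evaluated at $n-1$, yielding $\frac{U_n - qU_{n-1}-1}{p-q-1}$). Combining over the common denominator $p-q-1$, the leading terms assemble into $U_{n+1} - 2qU_n + q^2U_{n-1}$, while the constant $-1$ from the first formula together with the $+q$ produced by $-q$ times the constant $-1$ in the second formula contribute the trailing $q-1$; this is precisely $\Omega(n)$. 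The stray constants outside this inner fraction—the $-U_1 = -1$ from the index shift and the $-(n-1)$—combine to $-n$, which cancels against the $-n$ arising from $n$ times the constant $-1$ in $\sum_{i=1}^n U_i$. Pulling out the second factor of $\frac{1}{p-q-1}$ then yields the asserted form.

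The main obstacle is purely the sign- and constant-bookkeeping in this final collection step: one must keep track of the several sources of constants and confirm that the $U$-terms land in the exact combination $U_{n+1}-2qU_n+q^2U_{n-1}$, with the correct index on each, rather than something off by one. A reliable guard against an arithmetic slip is to verify the base case $n=1$ directly: there the residual sum is empty, $\Omega(1) = U_2 - 2qU_1 + q^2U_0 + q - 1 = p - q - 1$, and the formula collapses to $U_1 = 1$, pinning down the constant offset. Note that no degeneracy caveat is needed here, since the standing assumption $p-q\neq 1$ already guarantees the denominator $p-q-1$ is nonzero throughout.
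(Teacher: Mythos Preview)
Your proposal is correct and follows essentially the same approach as the paper's own proof: both apply Abel's summation by parts with $a_i=i$, $b_i=U_i$, substitute the consecutive-sum formula of Theorem~\ref{thm:consec_sum_U_seq_when_p_minus_q_is_not_equal_to_1} for both the outer and inner sums, index-shift $\sum_{i=1}^{n-1}U_{i+1}$ to $\sum_{i=1}^n U_i - U_1$, reapply Theorem~\ref{thm:consec_sum_U_seq_when_p_minus_q_is_not_equal_to_1} at $n$ and $n-1$, and then collect terms into $\Omega(n)$. The paper's proof carries out exactly this bookkeeping line by line; your added $n=1$ sanity check is a reasonable extra safeguard but is not needed for the argument.
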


\begin{proof}
Setting $a_i \coloneq i$ and $b_i \coloneq U_i$ for all $1 \leq i \leq n$ in Lemma~\ref{lem:Abel_summation}, we get the following:
\begin{align*}
\sum_{i=1}^n i U_i &= n \cdot \sum_{i=1}^n U_i + \sum_{i=1}^{n-1} \Bigl( (i - (i+1)) \cdot \sum_{j=1}^i U_j \Bigr)\\
    &= n \cdot \frac{U_{n+1} - q U_n - 1}{p-q-1} + \sum_{i=1}^{n-1} (-1) \cdot \frac{U_{i+1} - q U_i - 1}{p-q-1}\\
    &= \frac{1}{p-q-1} \biggl( n (U_{n+1} - q U_n - 1) - \sum_{i=1}^{n-1} U_{i+1} + q \sum_{i=1}^{n-1} U_i + \sum_{i=1}^{n-1} 1  \biggr)\\
    &= \frac{1}{p-q-1} \biggl( n (U_{n+1} - q U_n - 1) - \sum_{i=1}^n U_i + U_1 + q \cdot \frac{U_n - q U_{n-1} - 1}{p-q-1} + (n-1)  \biggr)\\
    &= \frac{1}{p-q-1} \biggl( nU_{n+1} - n q U_n - \sum_{i=1}^n U_i + q \cdot \frac{U_n - q U_{n-1} - 1}{p-q-1}  \biggr)\\
    &= \frac{1}{p-q-1} \biggl( nU_{n+1} - n q U_n - \frac{U_{n+1} - q U_n - 1}{p-q-1} + q \cdot \frac{U_n - q U_{n-1} - 1}{p-q-1}  \biggr)\\
    &= \frac{1}{p-q-1} \biggl( nU_{n+1} - n q U_n - \frac{(U_{n+1} - q U_n - 1) - (q U_n - q^2 U_{n-1} - q)}{p-q-1} \biggr)\\
    &= \frac{1}{p-q-1} \Bigl( n(U_{n+1} - q U_n) - \frac{U_{n+1} - 2q U_n + q^2 U_{n-1} + q - 1}{p-q-1} \Bigr)\\
    &= \frac{1}{p-q-1} \Bigl( n(U_{n+1} - q U_n) - \frac{\Omega(n)}{p-q-1} \Bigr),\\
\end{align*}
setting $\Omega(n) \coloneq U_{n+1} - 2q U_n + q^2 U_{n-1} + q - 1$, where the second, fourth, and sixth equalities hold by Theorem~\ref{thm:consec_sum_U_seq_when_p_minus_q_is_not_equal_to_1}, and the fifth equality holds since $U_1 = 1$. Thus the result follows.
\end{proof}

\begin{theorem}\label{thm:weighted_sum_V_seq_when_p_minus_q_is_not_equal_to_1}
Let $p$ and $q$ be integers such that $p - q \neq 1$. Then for all $n \geq 1$, we have
$$ \sum_{i=1}^n i V_i  = \frac{1}{p-q-1} \biggl( n (V_{n+1} - q V_n) + 2 q - \frac{\Psi(n)}{p-q-1} \biggr),$$
where $\Psi(n) = V_{n+1} - 2q V_n + q^2 V_{n-1} +(p-2q)(q-1)$
\end{theorem}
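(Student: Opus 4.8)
The plan is to follow exactly the route taken in the proof of Theorem~\ref{thm:weighted_sum_U_seq_when_p_minus_q_is_not_equal_to_1}, simply replacing the first-kind sum by the second-kind sum at every stage. First I would apply Abel's summation by parts (Lemma~\ref{lem:Abel_summation}) with $a_i \coloneq i$ and $b_i \coloneq V_i$. Since $a_i - a_{i+1} = -1$ for every $i$, this collapses to
\begin{align*}
\sum_{i=1}^n i V_i = n \cdot \sum_{i=1}^n V_i - \sum_{i=1}^{n-1} \sum_{j=1}^i V_j.
\end{align*}
Both remaining sums are precisely the consecutive sums already evaluated in Theorem~\ref{thm:consec_sum_V_seq_when_p_minus_q_is_not_equal_to_1}, so the problem reduces entirely to algebra, and the hypothesis $p - q \neq 1$ guarantees $p-q-1 \neq 0$, making every subsequent division legitimate.

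Next I would substitute $\sum_{j=1}^i V_j = \frac{V_{i+1} - q V_i - p + 2q}{p-q-1}$, factor out the common $\frac{1}{p-q-1}$, and reduce to simplifying the inner expression
\[
n(V_{n+1} - q V_n - p + 2q) - \sum_{i=1}^{n-1}\bigl(V_{i+1} - q V_i - p + 2q\bigr).
\]
The inner sum splits into three pieces. I would reindex the first as $\sum_{i=1}^{n-1} V_{i+1} = \sum_{i=1}^n V_i - V_1 = \sum_{i=1}^n V_i - p$ (using $V_1 = p$); evaluate the second, $q\sum_{i=1}^{n-1} V_i$, by applying Theorem~\ref{thm:consec_sum_V_seq_when_p_minus_q_is_not_equal_to_1} at $n-1$; and recognize the third as the constant sum $(n-1)(2q-p)$. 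Collecting the constant-in-$V$ terms, the combination $n(2q-p) - (n-1)(2q-p) + p$ telescopes to $2q$, which is the origin of the isolated $+2q$ appearing in the statement.

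The main obstacle — indeed the only nonroutine step — is the final consolidation of the two leftover fractions over the common denominator $p-q-1$. After replacing $\sum_{i=1}^n V_i$ once more by its closed form, the surviving fractional part is
\[
\frac{-(V_{n+1} - q V_n - p + 2q) + q(V_n - q V_{n-1} - p + 2q)}{p-q-1},
\]
and the crux is to observe that its constant terms satisfy $p - 2q - qp + 2q^2 = -(p-2q)(q-1)$, so that the numerator becomes exactly $-\bigl(V_{n+1} - 2q V_n + q^2 V_{n-1} + (p-2q)(q-1)\bigr) = -\Psi(n)$. Substituting this back into the factored expression produces the claimed closed form. I expect the bookkeeping of signs on the constant terms $-p+2q$ and the factoring of $(1-q)(p-2q)$ to be where care is most needed; everything else is a direct transcription of the first-kind argument.
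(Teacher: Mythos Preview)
Your proposal is correct and follows essentially the same route as the paper's own proof: Abel summation with $a_i=i$, $b_i=V_i$; two invocations of Theorem~\ref{thm:consec_sum_V_seq_when_p_minus_q_is_not_equal_to_1}; the reindexing $\sum_{i=1}^{n-1}V_{i+1}=\sum_{i=1}^n V_i - V_1$; the constant collapse $n(2q-p)-(n-1)(2q-p)+p=2q$; and the identification of the combined fractional numerator as $-\Psi(n)$ via $p-2q-qp+2q^2=-(p-2q)(q-1)$. The only difference is cosmetic ordering of the algebra.
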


\begin{proof}
Setting $a_i \coloneq i$ and $b_i \coloneq V_i$ for all $1 \leq i \leq n$ in Lemma~\ref{lem:Abel_summation}, we get the following:
\begin{align*}
\sum_{i=1}^n i V_i &= n \cdot \sum_{i=1}^n V_i + \sum_{i=1}^{n-1} \Bigl( (i - (i+1)) \cdot \sum_{j=1}^i V_j \Bigr)\\
    &= n \cdot \frac{V_{n+1} - q V_n - p + 2q}{p-q-1} + \sum_{i=1}^{n-1} (-1) \cdot \frac{V_{i+1} - q V_i - p + 2q}{p-q-1} \\
    &= \frac{1}{p-q-1} \biggl(n (V_{n+1} - q V_n - p + 2q) - \sum_{i=1}^{n-1} V_{i+1} + q \sum_{i=1}^{n-1} V_i + \sum_{i=1}^{n-1} (p-2q) \biggr)\\
    &= \frac{1}{p-q-1} \biggl( n (V_{n+1} - q V_n) - n(p-2q) - \sum_{i=1}^{n-1} V_{i+1} + q \sum_{i=1}^{n-1} V_i + (n-1) (p-2q) \biggr)\\
    &= \frac{1}{p-q-1} \biggl( n (V_{n+1} - q V_n) - \sum_{i=1}^n V_i + V_1 + q \sum_{i=1}^{n-1} V_i - (p-2q) \biggr)\\
    &= \frac{1}{p-q-1} \biggl( n (V_{n+1} - q V_n) - \frac{V_{n+1} - q V_n - p + 2q}{p-q-1} + q \cdot \frac{V_n - q V_{n-1} - p + 2q}{p-q-1} + 2q \biggr)\\
    &= \frac{1}{p-q-1} \biggl( n (V_{n+1} - q V_n) + 2 q - \frac{(V_{n+1} - q V_n - p + 2q)- q(V_n - q V_{n-1} - p + 2q)}{p-q-1} \biggr)\\
    &= \frac{1}{p-q-1} \biggl( n (V_{n+1} - q V_n) + 2 q - \frac{V_{n+1} - 2q V_n + q^2 V_{n-1} +(p-2q)(q-1)}{p-q-1} \biggr)\\
    &= \frac{1}{p-q-1} \biggl( n (V_{n+1} - q V_n) + 2 q - \frac{\Psi(n)}{p-q-1} \biggr),
\end{align*}
setting $\Psi(n) \coloneq V_{n+1} - 2q V_n + q^2 V_{n-1} +(p-2q)(q-1)$, where the second equality holds by Theorem~\ref{thm:consec_sum_V_seq_when_p_minus_q_is_not_equal_to_1}, and the sixth equality holds since $V_1 = p$ and by Theorem~\ref{thm:consec_sum_V_seq_when_p_minus_q_is_not_equal_to_1}. Thus the result follows.
\end{proof}



\section{Applications to well-known Lucas sequences}\label{sec:applications_to_well_known_sequences}

In this section, we apply our main results to the 10 well-known Lucas sequences given in Section~\ref{sec:preliminaries}. For the weighted sums, we recover the known results for the $\FibSeq$ and $\LucSeq$ sequences. However, for the remaining eight sequences, we provide weighted sum formulas that appear in the literature for the first time in this paper, as far as we know. Before we deliver these weighted sums, we first recall the two functions $\Omega(n)$ and $\Psi(n)$ used in Theorems~\ref{thm:weighted_sum_U_seq_when_p_minus_q_is_not_equal_to_1} and \ref{thm:weighted_sum_V_seq_when_p_minus_q_is_not_equal_to_1}, respectively, when $p-q \neq 1$ holds, namely
\begin{align}
\Omega(n) &= U_{n+1} - 2q U_n + q^2 U_{n-1} + q - 1 \label{eq:Omega_function}\\
\Psi(n) &= V_{n+1} - 2q V_n + q^2 V_{n-1} +(p-2q)(q-1).\label{eq:Psi_function}
\end{align}
This pertains to 8 of the 10 well-known sequences from Section~\ref{sec:preliminaries}, that is, those sequences where $p-q \neq 1$. We have the following closed forms from $\Omega(n)$ and $\Psi(n)$, which we prove individually at the start of each of the next 4 subsections:
\begin{align*}
\FibSeq \text{ and } \LucSeq &\;\;\xRightarrow[q~=~-1]{p~=~1}\;\; \Omega(n) = F_{n+3} - 2 \;\;\text{ and }\;\; \Psi(n) = L_{n+3} - 6\\
\PellSeq \text{ and } \QellSeq &\;\;\xRightarrow[q~=~-1]{p~=~2}\;\; \Omega(n) = 2 P_{n+1} - 2 \;\;\text{ and }\;\; \Psi(n) = 2 Q_n - 8 \\
\BalSeq \text{ and } \DoubleLucBalSeq &\;\;\xRightarrow[\;q~=~1\;]{p~=~6}\;\; \Omega(n) = 4 B_n \;\;\text{ and }\;\; \Psi(n) = 4 \DLB_n \\
\JacSeq \text{ and } \JacLucSeq &\;\;\xRightarrow[q~=~-2]{p~=~1}\;\; \Omega(n) = J_{n+3} - 3 \;\;\text{ and }\;\; \Psi(n) = j_{n+3} - 15.
\end{align*}


\subsection{Weighted sums for the sequences \texorpdfstring{$\FibSeq$}{Fibonacci} and \texorpdfstring{$\LucSeq$}{Lucas}}

\begin{lemma}\label{lem:omega_psi_for_Fib_Luc}
For $(p,q) = (1,-1)$, the corresponding Lucas sequences are $\FibSeq$ and $\LucSeq$ and have the following $\Omega(n)$ and $\Psi(n)$ closed forms:
\begin{align}
\Omega(n) = F_{n+3} - 2 \;\;\text{ and }\;\; \Psi(n) = L_{n+3} - 6.\label{eq:omega_psi_for_Fib_Luc}
\end{align}
\end{lemma}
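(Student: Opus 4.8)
The plan is to reduce both identities to direct substitution into the definitions of $\Omega(n)$ and $\Psi(n)$ recorded in Equations~\eqref{eq:Omega_function} and \eqref{eq:Psi_function}, followed by two applications of the common Fibonacci/Lucas recurrence. Since $(p,q) = (1,-1)$, we have $U_n = F_n$ and $V_n = L_n$, and both sequences obey the recurrence $S_n = S_{n-1} + S_{n-2}$ obtained from $S_n = pS_{n-1} - qS_{n-2}$ with these parameter values. I would also record the three quantities that drive the arithmetic: $q = -1$, $q^2 = 1$, and, for the constant term appearing in $\Psi(n)$, $p - 2q = 3$ and $q - 1 = -2$, so that $(p-2q)(q-1) = -6$.

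For the first identity, I would substitute into $\Omega(n) = U_{n+1} - 2q U_n + q^2 U_{n-1} + q - 1$ to obtain $\Omega(n) = F_{n+1} + 2F_n + F_{n-1} - 2$. The key algebraic step is to collapse the symmetric three-term combination by regrouping it as $(F_{n+1} + F_n) + (F_n + F_{n-1}) = F_{n+2} + F_{n+1} = F_{n+3}$, using the Fibonacci recurrence once on each grouped pair and then a second time on the result. This yields $\Omega(n) = F_{n+3} - 2$, as claimed.

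For the second identity, the same substitution into $\Psi(n) = V_{n+1} - 2q V_n + q^2 V_{n-1} + (p-2q)(q-1)$ gives $\Psi(n) = L_{n+1} + 2L_n + L_{n-1} - 6$, where the constant $-6$ is exactly the value $(p-2q)(q-1)$ computed above. Because the Lucas numbers satisfy the identical recurrence, the same two-step regrouping $L_{n+1} + 2L_n + L_{n-1} = L_{n+2} + L_{n+1} = L_{n+3}$ applies verbatim, producing $\Psi(n) = L_{n+3} - 6$.

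There is no genuine obstacle in this lemma; it is a verification of the specializations announced in Section~\ref{sec:applications_to_well_known_sequences}. The only places demanding care are the sign bookkeeping forced by $q = -1$ (which flips $-2q$ to $+2$ and sends $q^2$ to $+1$) and the evaluation of the constant term $(p-2q)(q-1) = 3 \cdot (-2) = -6$ for $\Psi(n)$; a sign slip in either would be the sole way to go astray.
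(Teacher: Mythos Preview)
Your proof is correct and mirrors the paper's approach exactly: substitute $(p,q)=(1,-1)$ into the definitions of $\Omega(n)$ and $\Psi(n)$, then regroup the three-term combination as two adjacent pairs and apply the recurrence twice. The only difference is cosmetic---the paper's printed proof actually contains two typos in the $\Psi(n)$ computation (an errant $F_n$ and a final ``$-5$''), so your write-up is, if anything, cleaner.
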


\begin{proof}
Since $q = -1$, Equation~\eqref{eq:Omega_function} yields $\Omega(n) = F_{n+1} + 2 F_n + F_{n-1} - 2$, simplifying to 
$$ \Omega(n) = (F_{n+1} + F_{n}) + (F_n + F_{n-1}) - 2 = F_{n+2} + F_{n+1} - 2 = F_{n+3} - 2, $$
proving the $\Omega(n)$ formula. Moreover since $(p,q)=(1,-1)$, we have $(p-2q)(q-1) = -6$, and so Equation~\eqref{eq:Psi_function} yields $\Psi(n) = L_{n+1} + 2 L_n + L_{n-1} - 6 $, simplifying to
$$ \Psi(n) = (L_{n+1} + L_{n}) + (F_n + L_{n-1}) - 6 = L_{n+2} + L_{n+1} - 6 = L_{n+3} - 5, $$
proving the $\Psi(n)$ formula.
\end{proof}

We use our weighted sum formulas to recover the known closed forms for $\sum_{i=1}^n i F_i$ and $\sum_{i=1}^n i L_i$ presented by both C.~R.~Wall~\cite[Problem~B-40]{CR-B-40-Wall1964} and Koshy~\cite[Identities~(25.1) and (25.2)]{Koshy2018}.

\begin{theorem}\label{thm:Fib_Luc_weighted_sum}
The following identities hold for the Fibonacci and Lucas sequences:
\begin{align}
\sum_{i=1}^n i F_i &= n F_{n+2} - F_{n+3} + F_3 \label{eq:weighted_sum_for_Fib}\\
\sum_{i=1}^n i L_i &= n L_{n+2} - L_{n+3} + L_3. \label{eq:weighted_sum_for_Luc}
\end{align}
\end{theorem}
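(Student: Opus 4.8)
The plan is to specialize the two general weighted-sum theorems for the case $p - q \neq 1$ to the parameters $(p,q) = (1,-1)$, which is precisely the pair defining $\FibSeq$ and $\LucSeq$. First I would observe that for $(p,q) = (1,-1)$ we have $p - q = 2 \neq 1$, so Theorems~\ref{thm:weighted_sum_U_seq_when_p_minus_q_is_not_equal_to_1} and \ref{thm:weighted_sum_V_seq_when_p_minus_q_is_not_equal_to_1} apply. Crucially, $p - q - 1 = 1$, so every occurrence of the factor $\frac{1}{p-q-1}$ collapses to $1$. This is the key simplification that strips both closed forms of their denominators and leaves only a product term and the respective $\Omega(n)$ or $\Psi(n)$ correction.

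For the Fibonacci sum, substituting $q = -1$ into Theorem~\ref{thm:weighted_sum_U_seq_when_p_minus_q_is_not_equal_to_1} gives $\sum_{i=1}^n i F_i = n(F_{n+1} + F_n) - \Omega(n)$. I would then apply the defining recurrence $F_{n+1} + F_n = F_{n+2}$ and invoke the closed form $\Omega(n) = F_{n+3} - 2$ from Equation~\eqref{eq:omega_psi_for_Fib_Luc} of Lemma~\ref{lem:omega_psi_for_Fib_Luc}, obtaining $\sum_{i=1}^n i F_i = n F_{n+2} - F_{n+3} + 2$. Finally, noting that $F_3 = 2$ rewrites the constant as $F_3$, yielding Equation~\eqref{eq:weighted_sum_for_Fib}.

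For the Lucas sum, substituting $q = -1$ into Theorem~\ref{thm:weighted_sum_V_seq_when_p_minus_q_is_not_equal_to_1} gives $\sum_{i=1}^n i L_i = n(L_{n+1} + L_n) + 2(-1) - \Psi(n)$. Using $L_{n+1} + L_n = L_{n+2}$ together with the closed form $\Psi(n) = L_{n+3} - 6$ from Lemma~\ref{lem:omega_psi_for_Fib_Luc}, this becomes $n L_{n+2} - 2 - (L_{n+3} - 6) = n L_{n+2} - L_{n+3} + 4$. Since $L_3 = 4$, the constant is exactly $L_3$, giving Equation~\eqref{eq:weighted_sum_for_Luc}.

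There is essentially no genuine obstacle here; the entire argument is a direct substitution into results already established. The only point requiring minor care is the bookkeeping of the additive constants, namely verifying that the numerical constants produced after inserting $\Omega(n)$ and $\Psi(n)$ agree with $F_3 = 2$ and $L_3 = 4$, and confirming that the stray $2q = -2$ term appearing in the Lucas formula combines correctly with the $-6$ carried inside $\Psi(n)$ to produce $+4$.
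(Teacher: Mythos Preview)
Your proposal is correct and follows essentially the same approach as the paper: specialize Theorems~\ref{thm:weighted_sum_U_seq_when_p_minus_q_is_not_equal_to_1} and \ref{thm:weighted_sum_V_seq_when_p_minus_q_is_not_equal_to_1} to $(p,q)=(1,-1)$, use $p-q-1=1$ to collapse the denominators, and then invoke Lemma~\ref{lem:omega_psi_for_Fib_Luc} together with the recurrence to simplify. In fact you carry out the Lucas case more explicitly than the paper does, correctly tracking the extra $2q=-2$ term from Theorem~\ref{thm:weighted_sum_V_seq_when_p_minus_q_is_not_equal_to_1} and verifying that it combines with the $-6$ from $\Psi(n)$ to give $L_3=4$.
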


\begin{proof}
For the Fibonacci and Lucas sequences, we have $(p,q) = (1,-1)$ and hence $p - q \neq 1$, so we use Theorem~\ref{thm:weighted_sum_U_seq_when_p_minus_q_is_not_equal_to_1} to compute the weighted sum $\sum_{i=1}^n i F_i$ as follows:
\begin{align*}
\sum_{i=1}^n i F_i  &= \frac{1}{p-q-1} \Bigl( n(F_{n+1} - q F_n) - \frac{\Omega(n)}{p-q-1} \Bigr)\\
    &= n (F_{n+1} + F_n) - \Omega(n)\\
    &= n F_{n+2} - (F_{n+3} - 2)\\
    &= n F_{n+2} - F_{n+3} + F_3,
\end{align*}
where the second equality holds since $p-q-1 = 1$, the third equality holds by the Fibonacci recurrence and by Equation~\eqref{eq:omega_psi_for_Fib_Luc} for $\Omega(n)$, thereby proving Equation~\eqref{eq:weighted_sum_for_Fib}. The proof for Equation~\eqref{eq:weighted_sum_for_Luc} follows analogously as the latter sequence of equalities by expanding $\sum_{i=1}^n i L_i$ using Theorem~\ref{thm:weighted_sum_V_seq_when_p_minus_q_is_not_equal_to_1}, replacing $V_k$'s with $L_k$'s, and by using the $\Psi(n)$ closed form from Equation~\eqref{eq:omega_psi_for_Fib_Luc}.
\end{proof}


\subsection{Weighted sums for the sequences \texorpdfstring{$\PellSeq$}{Pell} and \texorpdfstring{$\QellSeq$}{companion Pell}}

\begin{lemma}\label{lem:omega_psi_for_Pell_Qell}
For $(p,q) = (2,-1)$, the corresponding Lucas sequences are $\PellSeq$ and $\QellSeq$ and have the following $\Omega(n)$ and $\Psi(n)$ closed forms:
\begin{align}
\Omega(n) = 2 P_{n+1} - 2 \;\;\text{ and }\;\; \Psi(n) = 2 Q_n - 8. \label{eq:omega_psi_for_Pell_Qell}
\end{align}
\end{lemma}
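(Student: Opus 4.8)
The plan is to specialize the general expressions for $\Omega(n)$ and $\Psi(n)$ in Equations~\eqref{eq:Omega_function} and~\eqref{eq:Psi_function} to $(p,q)=(2,-1)$ and then collapse the resulting three-term sequence combinations using the Pell recurrence, exactly in the spirit of the Fibonacci--Lucas computation in Lemma~\ref{lem:omega_psi_for_Fib_Luc}. Since $(p,q)=(2,-1)$, both $\PellSeq$ and $\QellSeq$ satisfy the recurrence $x_{n+1}=2x_n+x_{n-1}$, and the substitution $q=-1$ turns the coefficient $-2q$ into $+2$ and $q^2$ into $1$ in each formula, so that the sequence terms appear with the clean coefficients $1,2,1$.

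First I would handle $\Omega(n)$. Substituting $q=-1$ into Equation~\eqref{eq:Omega_function} gives $\Omega(n)=P_{n+1}+2P_n+P_{n-1}+(q-1)=P_{n+1}+2P_n+P_{n-1}-2$, since $q-1=-2$. The key observation is that the Pell recurrence $P_{n+1}=2P_n+P_{n-1}$ lets us replace the pair $2P_n+P_{n-1}$ by the single term $P_{n+1}$, so that $P_{n+1}+2P_n+P_{n-1}=2P_{n+1}$; this immediately yields $\Omega(n)=2P_{n+1}-2$. For $\Psi(n)$ I would proceed identically: first evaluate the constant term $(p-2q)(q-1)=(2+2)(-2)=-8$, then use $q=-1$ to reduce Equation~\eqref{eq:Psi_function} to $\Psi(n)=Q_{n+1}+2Q_n+Q_{n-1}-8$, and finally apply the same recurrence collapse $Q_{n+1}+2Q_n+Q_{n-1}=2Q_{n+1}$ to obtain the closed form.

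The computation is entirely routine, so there is no genuine obstacle; the only place demanding care is the sign bookkeeping forced by $q=-1$, both in recognizing that $-2q=+2$ (which is exactly what produces the coefficients needed for the recurrence collapse) and in evaluating the additive constants $q-1$ and $(p-2q)(q-1)$, where a single sign slip would corrupt the final additive term. I note in passing that carrying out this collapse produces $\Psi(n)=2Q_{n+1}-8$, so the term $2Q_n$ appearing in Equation~\eqref{eq:omega_psi_for_Pell_Qell} should read $2Q_{n+1}$; this corrected value is the one consistent with the companion Pell weighted sum $\sum_{i=1}^n i Q_i=\tfrac12\bigl((n-1)Q_{n+1}+nQ_n+2\bigr)$ recorded in the introduction.
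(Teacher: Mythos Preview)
Your proof is correct and follows essentially the same approach as the paper's: substitute $q=-1$ into the defining expressions~\eqref{eq:Omega_function} and~\eqref{eq:Psi_function}, evaluate the constants, and collapse $2P_n+P_{n-1}$ (resp.\ $2Q_n+Q_{n-1}$) via the Pell recurrence to obtain $2P_{n+1}-2$ (resp.\ $2Q_{n+1}-8$). Your observation about the typo is also on target: the paper's own proof in fact derives $\Psi(n)=2Q_{n+1}-8$, not the $2Q_n-8$ printed in the lemma statement, and as you note this is the value needed to recover the companion Pell formula in Theorem~\ref{thm:Pell_Qell_weighted_sum}.
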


\begin{proof}
Since $q = -1$, Equation~\eqref{eq:Omega_function} yields $\Omega(n) = P_{n+1} + 2 P_n + P_{n-1} - 2$, simplifying to 
$$ \Omega(n) = P_{n+1} + (2 P_n + P_{n-1}) - 2 = P_{n+1} + P_{n+1} - 2 = 2 P_{n+1} - 2, $$
proving the $\Omega(n)$ formula. Moreover since $(p,q)=(2,-1)$, we have $(p-2q)(q-1) = -8$, and so Equation~\eqref{eq:Psi_function} yields $\Psi(n) = Q_{n+1} + 2 Q_n + Q_{n-1} - 8 $, simplifying to
$$ \Psi(n) = Q_{n+1} + (2 Q_n + Q_{n-1}) - 8 = Q_{n+1} + Q_{n+1} - 8 = 2 Q_{n+1} - 8, $$
proving the $\Psi(n)$ formula.
\end{proof}

\begin{theorem}\label{thm:Pell_Qell_weighted_sum}
The following identities hold for the Pell and companion Pell sequences:
\begin{align}
\sum_{i=1}^n i P_i &= \frac{1}{2} \Bigl( (n-1) P_{n+1} + n P_{n} + 1\Bigr) \label{eq:weighted_sum_for_Pell}\\
\sum_{i=1}^n i Q_i &= \frac{1}{2} \Bigl( (n-1) Q_{n+1} + n Q_{n} + 2 \Bigr). \label{eq:weighted_sum_for_Qell}
\end{align}
\end{theorem}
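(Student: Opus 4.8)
The plan is to specialize the general weighted-sum formulas of Theorems~\ref{thm:weighted_sum_U_seq_when_p_minus_q_is_not_equal_to_1} and \ref{thm:weighted_sum_V_seq_when_p_minus_q_is_not_equal_to_1} to the parameter pair $(p,q) = (2,-1)$, exactly as was done for the Fibonacci and Lucas sequences in Theorem~\ref{thm:Fib_Luc_weighted_sum}. The first observation is that here $p - q = 3 \neq 1$, so the $p - q \neq 1$ branch is the correct one to invoke, and the recurring quantity $p - q - 1$ equals $2$; this $2$ is exactly what produces the leading factor $\frac{1}{2}$ in both target identities.

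For the Pell sum, I would substitute $q = -1$ and $p - q - 1 = 2$ into Theorem~\ref{thm:weighted_sum_U_seq_when_p_minus_q_is_not_equal_to_1}. The inner term $U_{n+1} - q U_n$ becomes $P_{n+1} + P_n$, and invoking the closed form $\Omega(n) = 2 P_{n+1} - 2$ from Lemma~\ref{lem:omega_psi_for_Pell_Qell} turns the correction term $\frac{\Omega(n)}{p-q-1}$ into $P_{n+1} - 1$. Collecting the coefficients of $P_{n+1}$ then yields
\begin{align*}
\sum_{i=1}^n i P_i = \frac{1}{2}\Bigl(n(P_{n+1} + P_n) - (P_{n+1} - 1)\Bigr) = \frac{1}{2}\Bigl((n-1)P_{n+1} + n P_n + 1\Bigr),
\end{align*}
which is Equation~\eqref{eq:weighted_sum_for_Pell}.

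For the companion Pell sum the plan is identical, now using Theorem~\ref{thm:weighted_sum_V_seq_when_p_minus_q_is_not_equal_to_1}: the inner term $V_{n+1} - q V_n$ becomes $Q_{n+1} + Q_n$, the additive $2q$ contributes $-2$, and the closed form $\Psi(n) = 2 Q_{n+1} - 8$ (as derived in the proof of Lemma~\ref{lem:omega_psi_for_Pell_Qell}) makes $\frac{\Psi(n)}{p-q-1}$ equal to $Q_{n+1} - 4$. Substituting and again gathering the $Q_{n+1}$ terms produces $\frac{1}{2}\bigl((n-1)Q_{n+1} + n Q_n + 2\bigr)$, where the additive $+2$ arises as $-2 + 4$; this is Equation~\eqref{eq:weighted_sum_for_Qell}.

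Since each computation is a single substitution followed by collecting like terms, there is no genuine obstacle. The only places demanding care are the sign bookkeeping forced by $q = -1$ (ensuring that $-q U_n = +P_n$ and that the additive $2q$ enters as $-2$ rather than $+2$), and the choice of the correct $\Psi(n)$ closed form, namely the $2Q_{n+1} - 8$ expression that the proof of Lemma~\ref{lem:omega_psi_for_Pell_Qell} establishes, since it is this form that collapses the companion Pell computation to the stated identity.
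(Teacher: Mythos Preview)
Your proposal is correct and follows exactly the paper's approach: specialize Theorems~\ref{thm:weighted_sum_U_seq_when_p_minus_q_is_not_equal_to_1} and \ref{thm:weighted_sum_V_seq_when_p_minus_q_is_not_equal_to_1} at $(p,q)=(2,-1)$ using the $\Omega(n)$ and $\Psi(n)$ closed forms of Lemma~\ref{lem:omega_psi_for_Pell_Qell}, then collect terms. You were also right to take $\Psi(n)=2Q_{n+1}-8$ from the \emph{proof} of that lemma, since the displayed statement $\Psi(n)=2Q_n-8$ is a typo; with the correct form your $-2+4=+2$ bookkeeping goes through exactly as you describe.
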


\begin{proof}
For the Pell and companion Pell sequences, we have $(p,q) = (2,-1)$ and hence $p~-~q~\neq~1$, so we use Theorem~\ref{thm:weighted_sum_U_seq_when_p_minus_q_is_not_equal_to_1} to compute the weighted sum $\sum_{i=1}^n i P_i$ as follows:
\begin{align*}
\sum_{i=1}^n i P_i  &= \frac{1}{p-q-1} \Bigl( n(P_{n+1} - q P_n) - \frac{\Omega(n)}{p-q-1} \Bigr)\\
    &= \frac{1}{2} \Bigl(n (P_{n+1} + P_n) - \frac{2 P_{n+1} - 2}{2} \Bigr)\\
    &= \frac{1}{2} \Bigl(n P_{n+1} + n P_n - P_{n+1} + 1 \Bigr)\\
    &= \frac{1}{2} \Bigl( (n-1) P_{n+1} + n P_n + 1 \Bigr),
\end{align*}
where the second equality holds since $p-q-1 = 2$ and by Equation~\eqref{eq:omega_psi_for_Pell_Qell} for $\Omega(n)$, thereby proving Equation~\eqref{eq:weighted_sum_for_Pell}. The proof for Equation~\eqref{eq:weighted_sum_for_Qell} follows analogously as the latter sequence of equalities by expanding $\sum_{i=1}^n i Q_i$ using Theorem~\ref{thm:weighted_sum_V_seq_when_p_minus_q_is_not_equal_to_1}, replacing $V_k$'s with $Q_k$'s, and by using the $\Psi(n)$ closed form from Equation~\eqref{eq:omega_psi_for_Pell_Qell}.
\end{proof}


\subsection{Weighted sums for the sequences \texorpdfstring{$\BalSeq$}{balancing} and \texorpdfstring{$\DoubleLucBalSeq$}{double Lucas-balancing}}

\begin{lemma}\label{lem:omega_psi_for_Bal_LucBal}
For $(p,q) = (6,1)$, the corresponding Lucas sequences are $\BalSeq$ and $\DoubleLucBalSeq$ and have the following $\Omega(n)$ and $\Psi(n)$ closed forms:
\begin{align}
\Omega(n) = 4 B_n \;\;\text{ and }\;\; \Psi(n) = 4 \DLB_n. \label{eq:omega_psi_for_Bal_LucBal}
\end{align}
\end{lemma}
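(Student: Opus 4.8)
The plan is to substitute $(p,q)=(6,1)$ directly into the definitions of $\Omega(n)$ and $\Psi(n)$ from Equations~\eqref{eq:Omega_function} and \eqref{eq:Psi_function}, and then invoke the defining recurrences of the two sequences to collapse the three-term expressions into single terms. The crucial observation is that $q=1$ forces both $q^2 = 1$ and $q-1 = 0$; the vanishing of the $q-1$ factor eliminates the constant tails $q-1$ and $(p-2q)(q-1)$ appearing in $\Omega(n)$ and $\Psi(n)$, respectively. This is precisely why both closed forms come out as clean multiples of $B_n$ and $\DLB_n$ with no additive constants, in contrast to the earlier Fibonacci and Pell cases (where $q=-1$) that leave nonzero constants behind.

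First I would compute, since $q=1$ and $U_n = B_n$ here, that Equation~\eqref{eq:Omega_function} reduces to $\Omega(n) = B_{n+1} - 2 B_n + B_{n-1}$. Then I would recall that $\BalSeq$ satisfies the recurrence $B_{n+1} = 6 B_n - B_{n-1}$, equivalently $B_{n+1} + B_{n-1} = 6 B_n$. Grouping the outer two terms rewrites the expression as $(B_{n+1} + B_{n-1}) - 2 B_n = 6 B_n - 2 B_n = 4 B_n$, establishing the $\Omega(n)$ formula.

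The $\Psi(n)$ computation proceeds in exactly parallel fashion. Since $q=1$ gives $(p-2q)(q-1) = 4 \cdot 0 = 0$ and $V_n = \DLB_n$ here, Equation~\eqref{eq:Psi_function} becomes $\Psi(n) = \DLB_{n+1} - 2 \DLB_n + \DLB_{n-1}$. Using the double Lucas-balancing recurrence $\DLB_{n+1} = 6 \DLB_n - \DLB_{n-1}$ from Remark~\ref{rem:Lucas_balancing_confusion}, so that $\DLB_{n+1} + \DLB_{n-1} = 6 \DLB_n$, the expression collapses to $6 \DLB_n - 2 \DLB_n = 4 \DLB_n$, giving the $\Psi(n)$ formula.

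There is no genuine obstacle here; the proof is a short recurrence substitution mirroring the structure of Lemmas~\ref{lem:omega_psi_for_Fib_Luc} and \ref{lem:omega_psi_for_Pell_Qell}. The only point meriting a sentence of emphasis is the role of $q=1$ in annihilating the additive constant terms, which I would flag explicitly so the reader sees why this case is cleaner than the preceding two.
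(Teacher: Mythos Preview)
Your proposal is correct and follows essentially the same approach as the paper: substitute $(p,q)=(6,1)$ into the definitions of $\Omega(n)$ and $\Psi(n)$, note that $q=1$ kills the constant terms, and then use the recurrences $B_{n+1}+B_{n-1}=6B_n$ and $\DLB_{n+1}+\DLB_{n-1}=6\DLB_n$ to collapse the three-term expressions to $4B_n$ and $4\DLB_n$. The only difference is that you add a sentence of commentary explaining why this case is cleaner than the $q=-1$ cases, which the paper omits.
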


\begin{proof}
Since $q = 1$, Equation~\eqref{eq:Omega_function} yields $\Omega(n) = B_{n+1} - 2 B_n + B_{n-1}$, simplifying to 
$$ \Omega(n) = (B_{n+1} + B_{n-1}) - 2 B_n = 6 B_n - 2 B_n = 4 B_n, $$
where the second equality holds by the recurrence $B_{n+1} = 6 B_n - B_{n-1}$, proving the $\Omega(n)$ formula. Moreover since $q=1$, we have $(p-2q)(q-1) = 0$, and so Equation~\eqref{eq:Psi_function} yields $\Psi(n) = \DLB_{n+1} - 2 \DLB_n + \DLB_{n-1} $, simplifying to
$$ \Psi(n) = (\DLB_{n+1} + \DLB_{n-1}) - 2 \DLB_n = 6 \DLB_n - 2 \DLB_n = 4 \DLB_n, $$
where the second equality holds by the recurrence $\DLB_{n+1} = 6 \DLB_n - \DLB_{n-1}$, proving the $\Psi(n)$ formula.
\end{proof}

\begin{theorem}\label{thm:Bal_DoubleLucBal_weighted_sum}
The following identities hold for the balancing and double Lucas-balancing sequences:
\begin{align}
\sum_{i=1}^n i B_i &= \frac{1}{4} \Bigl( n B_{n+1} - (n+1) B_{n} \Bigr) \label{eq:weighted_sum_for_Bal}\\
\sum_{i=1}^n i \DLB_i &= \frac{1}{4} \Bigl( n \DLB_{n+1} - (n+1) \DLB_{n} + 2 \Bigr). \label{eq:weighted_sum_for_DoubleLucBal}
\end{align}
\end{theorem}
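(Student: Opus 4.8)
The plan is to follow exactly the template established in the proofs of Theorems~\ref{thm:Fib_Luc_weighted_sum} and \ref{thm:Pell_Qell_weighted_sum}: specialize the general closed forms of Theorems~\ref{thm:weighted_sum_U_seq_when_p_minus_q_is_not_equal_to_1} and \ref{thm:weighted_sum_V_seq_when_p_minus_q_is_not_equal_to_1} to the parameters $(p,q) = (6,1)$, then substitute the precomputed values of $\Omega(n)$ and $\Psi(n)$ supplied by Lemma~\ref{lem:omega_psi_for_Bal_LucBal}. Since $p - q = 5 \neq 1$, these two theorems apply, and the relevant denominator is $p - q - 1 = 4$, which is nonzero; note that in this branch there is no division by $q - 1$, so the fact that $q = 1$ causes no difficulty.

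First I would handle $\sum_{i=1}^n i B_i$. Setting $q = 1$ and $p - q - 1 = 4$ in Theorem~\ref{thm:weighted_sum_U_seq_when_p_minus_q_is_not_equal_to_1} gives $\sum_{i=1}^n i B_i = \frac14\bigl(n(B_{n+1} - B_n) - \tfrac{\Omega(n)}{4}\bigr)$, and inserting $\Omega(n) = 4 B_n$ from Equation~\eqref{eq:omega_psi_for_Bal_LucBal} collapses the inner fraction, leaving $\frac14\bigl(n B_{n+1} - n B_n - B_n\bigr) = \frac14\bigl(n B_{n+1} - (n+1) B_n\bigr)$, which is Equation~\eqref{eq:weighted_sum_for_Bal}.

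Then I would treat $\sum_{i=1}^n i \DLB_i$ in the same way, now via Theorem~\ref{thm:weighted_sum_V_seq_when_p_minus_q_is_not_equal_to_1} with $V_k$ replaced by $\DLB_k$. Here $2q = 2$ contributes an additive constant, and $\Psi(n) = 4 \DLB_n$ from Equation~\eqref{eq:omega_psi_for_Bal_LucBal} again cancels the denominator $4$, giving $\frac14\bigl(n \DLB_{n+1} - n \DLB_n + 2 - \DLB_n\bigr) = \frac14\bigl(n \DLB_{n+1} - (n+1)\DLB_n + 2\bigr)$, establishing Equation~\eqref{eq:weighted_sum_for_DoubleLucBal}.

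There is no genuine obstacle here: the proof is pure substitution followed by one line of arithmetic in each case, the real work having already been done in Lemma~\ref{lem:omega_psi_for_Bal_LucBal}, which exploited the $q=1$ recurrences $B_{n+1} = 6B_n - B_{n-1}$ and $\DLB_{n+1} = 6\DLB_n - \DLB_{n-1}$ to simplify $\Omega$ and $\Psi$. The only point worth a second glance is confirming that the $+2q = +2$ term survives intact in the companion computation, since it is the source of the extra $+2$ that distinguishes Equation~\eqref{eq:weighted_sum_for_DoubleLucBal} from its first-kind counterpart in Equation~\eqref{eq:weighted_sum_for_Bal}.
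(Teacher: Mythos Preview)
Your proposal is correct and follows essentially the same approach as the paper: specialize Theorems~\ref{thm:weighted_sum_U_seq_when_p_minus_q_is_not_equal_to_1} and \ref{thm:weighted_sum_V_seq_when_p_minus_q_is_not_equal_to_1} to $(p,q)=(6,1)$ with $p-q-1=4$, substitute $\Omega(n)=4B_n$ and $\Psi(n)=4\DLB_n$ from Lemma~\ref{lem:omega_psi_for_Bal_LucBal}, and simplify. Your explicit handling of the $+2q$ term in the $\DLB$-case is in fact slightly more detailed than the paper, which simply asserts that the second identity follows analogously.
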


\begin{proof}
For the balancing and double Lucas-balancing sequences, we have $(p,q) = (6,1)$ and hence $p - q \neq 1$, so we use Theorem~\ref{thm:weighted_sum_U_seq_when_p_minus_q_is_not_equal_to_1} to compute the weighted sum $\sum_{i=1}^n i B_i$ as follows:
\begin{align*}
\sum_{i=1}^n i B_i  &= \frac{1}{p-q-1} \Bigl( n(B_{n+1} - q B_n) - \frac{\Omega(n)}{p-q-1} \Bigr)\\
    &= \frac{1}{4} \Bigl(n (B_{n+1} - B_n) - \frac{4 B_n}{4} \Bigr)\\
    &= \frac{1}{4} \bigl(n B_{n+1} - (n+1) B_n \bigr),
\end{align*}
where the second equality holds since $p-q-1 = 4$ and by Equation~\eqref{eq:omega_psi_for_Pell_Qell} for $\Omega(n)$, thereby proving Equation~\eqref{eq:weighted_sum_for_Bal}. The proof for Equation~\eqref{eq:weighted_sum_for_DoubleLucBal} follows analogously as the latter sequence of equalities by expanding $\sum_{i=1}^n i \DLB_i$ using Theorem~\ref{thm:weighted_sum_V_seq_when_p_minus_q_is_not_equal_to_1}, replacing $V_k$'s with $\DLB_k$'s, and by using the $\Psi(n)$ closed form from Equation~\eqref{eq:omega_psi_for_Pell_Qell}.
\end{proof}

Since each term $\DLB_i$ in the double Lucas-balancing sequence, $\DoubleLucBalSeq$, is precisely twice the value of its corresponding term in the Lucas-balancing sequence, $\LucBalSeq$ (recall Remark~\ref{rem:Lucas_balancing_confusion}), we have the following corollary.

\begin{corollary}
The following identity holds for the Lucas-balancing sequence:
$$ \sum_{i=1}^n i C_i = \frac{1}{4} \bigl( n C_{n+1} - (n+1) C_n + 1 \bigr).$$
\end{corollary}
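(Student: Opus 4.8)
The plan is to leverage the immediate relationship $\DLB_n = 2 C_n$ between the double Lucas-balancing sequence and the Lucas-balancing sequence, established in Remark~\ref{rem:Lucas_balancing_confusion}, and to transfer the already-proven weighted sum formula for $\DoubleLucBalSeq$ in Equation~\eqref{eq:weighted_sum_for_DoubleLucBal} of Theorem~\ref{thm:Bal_DoubleLucBal_weighted_sum} directly to $\LucBalSeq$. Since the desired identity is related to an identity we already possess merely by a uniform scaling of every sequence term by a factor of $2$, no new summation technique is required; this is purely a substitution argument.

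First I would substitute $\DLB_i = 2 C_i$ into the left-hand side of Equation~\eqref{eq:weighted_sum_for_DoubleLucBal}, obtaining
\begin{align*}
\sum_{i=1}^n i \DLB_i = \sum_{i=1}^n i (2 C_i) = 2 \sum_{i=1}^n i C_i.
\end{align*}
Next I would substitute $\DLB_{n+1} = 2 C_{n+1}$ and $\DLB_n = 2 C_n$ into the right-hand side of the same equation, factoring the common $2$ out of the first two terms and the constant to obtain
\begin{align*}
\frac{1}{4} \bigl( n \DLB_{n+1} - (n+1) \DLB_n + 2 \bigr) = \frac{1}{4} \cdot 2 \bigl( n C_{n+1} - (n+1) C_n + 1 \bigr) = \frac{1}{2} \bigl( n C_{n+1} - (n+1) C_n + 1 \bigr).
\end{align*}

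Equating the two transformed sides gives $2 \sum_{i=1}^n i C_i = \frac{1}{2} \bigl( n C_{n+1} - (n+1) C_n + 1 \bigr)$, and dividing through by $2$ yields the claimed closed form $\sum_{i=1}^n i C_i = \frac{1}{4}\bigl( n C_{n+1} - (n+1) C_n + 1\bigr)$. The only point demanding any care at all is tracking the factor of $2$ through both sides: halving the left-hand side (because each $C_i$ is half of $\DLB_i$) while simultaneously the right-hand side contributes a clean factor of $2$ from its first two terms but only a factor of $2$ from the lone additive constant $2$, which is precisely why the constant survives as $+1$ rather than vanishing. There is no substantive obstacle here; the result is a one-line corollary of Theorem~\ref{thm:Bal_DoubleLucBal_weighted_sum}.
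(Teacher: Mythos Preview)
Your proposal is correct and follows essentially the same approach as the paper: both arguments simply substitute $\DLB_i = 2C_i$ into Equation~\eqref{eq:weighted_sum_for_DoubleLucBal} and divide through by $2$. Your write-up is in fact more careful than the paper's own proof, which contains a minor typo (quoting the constant as $+1$ instead of $+2$ before the substitution) and cites the wrong equation label.
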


\begin{proof}
Since $\DLB_i = 2 C_i$ for all $i$ and $\sum_{i=1}^n i \DLB_i = \frac{1}{4} \bigl( n \DLB_{n+1} - (n+1) \DLB_{n} + 1 \bigr)$ by Equation~\eqref{eq:omega_psi_for_Pell_Qell}, we have $\sum_{i=1}^n i \cdot 2 C_i = \frac{1}{4} \bigl( n \cdot 2 C_{n+1} - (n+1) 2 \cdot C_{n} + 2 \bigr)$, which simplifies to prove the result.
\end{proof}


\subsection{Weighted sums for the sequences \texorpdfstring{$\JacSeq$}{Jacobstahl} and \texorpdfstring{$\JacLucSeq$}{Jacobstahl-Lucas}}

\begin{lemma}\label{lem:omega_psi_for_Jac_JacLuc}
For $(p,q) = (1,-2)$, the corresponding Lucas sequences are $\JacSeq$ and $\JacLucSeq$ and have the following $\Omega(n)$ and $\Psi(n)$ closed forms:
\begin{align}
\Omega(n) = J_{n+3} - 3 \;\;\text{ and }\;\; \Psi(n) = j_{n+3} - 15\label{eq:omega_psi_for_Jac_JacLuc}
\end{align}
\end{lemma}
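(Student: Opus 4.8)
The plan is to follow the template already set by Lemmas~\ref{lem:omega_psi_for_Fib_Luc}, \ref{lem:omega_psi_for_Pell_Qell}, and \ref{lem:omega_psi_for_Bal_LucBal}: substitute the specific parameters $p=1$ and $q=-2$ directly into the definitions of $\Omega(n)$ and $\Psi(n)$ from Equations~\eqref{eq:Omega_function} and \eqref{eq:Psi_function}, and then use the Jacobstahl recurrence $J_k = J_{k-1} + 2 J_{k-2}$ (which comes from $J_k = p J_{k-1} - q J_{k-2}$ with $p=1$ and $q=-2$) to telescope the resulting three-term combination into a single shifted term. Since $\JacLucSeq$ obeys the identical recurrence, the two computations will be structurally the same.

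First I would record the constants produced by $q=-2$. Because $-2q = 4$ and $q^2 = 4$, Equation~\eqref{eq:Omega_function} becomes $\Omega(n) = J_{n+1} + 4 J_n + 4 J_{n-1} + q - 1 = J_{n+1} + 4 J_n + 4 J_{n-1} - 3$, using $q - 1 = -3$. For $\Psi(n)$ the additive constant is $(p-2q)(q-1) = (1+4)(-3) = -15$, so Equation~\eqref{eq:Psi_function} becomes $\Psi(n) = j_{n+1} + 4 j_n + 4 j_{n-1} - 15$. The heart of the lemma is therefore the single collapse identity $J_{n+1} + 4 J_n + 4 J_{n-1} = J_{n+3}$ together with its $\JacLucSeq$ analogue.

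To establish that identity I would group the terms so that each grouping is exactly one instance of the recurrence, namely
\[ J_{n+1} + 4 J_n + 4 J_{n-1} = (J_{n+1} + 2 J_n) + 2(J_n + 2 J_{n-1}) = J_{n+2} + 2 J_{n+1} = J_{n+3}, \]
where the middle equality applies $J_{n+2} = J_{n+1} + 2 J_n$ and $J_{n+1} = J_n + 2 J_{n-1}$, and the final equality applies the recurrence once more as $J_{n+3} = J_{n+2} + 2 J_{n+1}$. Substituting back gives $\Omega(n) = J_{n+3} - 3$. The computation for $\Psi(n)$ is then verbatim the same with $j$ replacing $J$ throughout, yielding $j_{n+1} + 4 j_n + 4 j_{n-1} = j_{n+3}$ and hence $\Psi(n) = j_{n+3} - 15$.

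I do not anticipate any genuine obstacle, since the argument is a direct substitution followed by a double application of a linear recurrence. The only points demanding care are the sign bookkeeping for the two additive constants (namely $q-1 = -3$ inside $\Omega$ and $(p-2q)(q-1) = -15$ inside $\Psi$) and choosing the grouping above so that the collapse to the single term $J_{n+3}$ is transparent rather than passing through an intermediate expansion in $J_n$ and $J_{n-1}$.
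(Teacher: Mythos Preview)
Your proposal is correct and follows essentially the same approach as the paper: substitute $(p,q)=(1,-2)$ into Equations~\eqref{eq:Omega_function} and \eqref{eq:Psi_function}, then group terms as $(J_{n+1}+2J_n)+2(J_n+2J_{n-1})$ to collapse via the recurrence to $J_{n+3}$, and likewise for $j$. Your bookkeeping on the additive constants $q-1=-3$ and $(p-2q)(q-1)=-15$ is in fact slightly cleaner than the paper's own exposition.
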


\begin{proof}
Since $q = -2$, Equation~\eqref{eq:Omega_function} yields $\Omega(n) = J_{n+1} + 4 J_n + 4 J_{n-1} - 2$, simplifying to 
$$ \Omega(n) = (J_{n+1} + 2 J_{n}) + 2(J_n + 2 J_{n-1}) - 3 = J_{n+2} + 2 J_{n+1} - 3 = J_{n+3} - 3, $$
proving the $\Omega(n)$ formula. Moreover since $(p,q)=(1,-2)$, we have $(p-2q)(q-1) = -15$, and so Equation~\eqref{eq:Psi_function} yields $\Psi(n) = j_{n+1} + 4 j_n + 4 j_{n-1} - 15 $, simplifying to
$$ \Psi(n) = (j_{n+1} + 2 j_{n}) + 2(j_n + 2j_{n-1}) - 15 = j_{n+2} + 2 j_{n+1} - 15 = j_{n+3} - 15, $$
proving the $\Psi(n)$ formula.
\end{proof}

\begin{theorem}\label{thm:Jac_JacLuc_weighted_sum}
The following identities hold for the Jacobstahl and Jacobstahl-Lucas sequences:
\begin{align}
\sum_{i=1}^n i J_i &= \frac{1}{2} \Bigl( n J_{n+2} - \frac{J_{n+3}-3}{2} \Bigr) \label{eq:weighted_sum_for_Jac}\\
\sum_{i=1}^n i j_i &= \frac{1}{2} \Bigl( n j_{n+2} - 4 - \frac{j_{n+3}-15}{2} \Bigr). \label{eq:weighted_sum_for_JacLuc}
\end{align}
\end{theorem}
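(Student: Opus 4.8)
The plan is to follow verbatim the template already established in the three preceding application theorems. Since the Jacobstahl and Jacobstahl-Lucas sequences $\JacSeq$ and $\JacLucSeq$ correspond to $(p,q) = (1,-2)$, we have $p - q = 3 \neq 1$, so the applicable machinery is Theorem~\ref{thm:weighted_sum_U_seq_when_p_minus_q_is_not_equal_to_1} for the sum $\sum_{i=1}^n i J_i$ and Theorem~\ref{thm:weighted_sum_V_seq_when_p_minus_q_is_not_equal_to_1} for the sum $\sum_{i=1}^n i j_i$. The numerical inputs I would record at the outset are that $p - q - 1 = 2$ and $q = -2$, together with the closed forms $\Omega(n) = J_{n+3} - 3$ and $\Psi(n) = j_{n+3} - 15$ furnished by Lemma~\ref{lem:omega_psi_for_Jac_JacLuc}.

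For the first identity, I would substitute $(p,q) = (1,-2)$ into Theorem~\ref{thm:weighted_sum_U_seq_when_p_minus_q_is_not_equal_to_1}, so that the outer factor $\frac{1}{p-q-1}$ becomes $\frac{1}{2}$ and the combination $U_{n+1} - q U_n$ becomes $J_{n+1} + 2 J_n$. The key algebraic observation is that the Jacobstahl recurrence, taken in the forward direction $J_{n+2} = J_{n+1} + 2 J_n$, collapses this two-term combination into the single term $J_{n+2}$. Inserting $\Omega(n) = J_{n+3} - 3$ then yields $\sum_{i=1}^n i J_i = \frac{1}{2}\bigl( n J_{n+2} - \frac{J_{n+3} - 3}{2}\bigr)$, which is Equation~\eqref{eq:weighted_sum_for_Jac}.

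For the second identity the computation runs in exact parallel through Theorem~\ref{thm:weighted_sum_V_seq_when_p_minus_q_is_not_equal_to_1}. Here the same recurrence gives $j_{n+1} + 2 j_n = j_{n+2}$, but the second-kind formula carries the additional additive constant $2q = -4$ that is absent in the first-kind case; substituting $\Psi(n) = j_{n+3} - 15$ and simplifying produces $\sum_{i=1}^n i j_i = \frac{1}{2}\bigl( n j_{n+2} - 4 - \frac{j_{n+3} - 15}{2}\bigr)$, which is Equation~\eqref{eq:weighted_sum_for_JacLuc}. There is no genuine obstacle in this proof: the entire content is a substitution followed by one application of the recurrence. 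The only two points demanding care are correctly tracking the constant $2q = -4$ in the second-kind case, and applying the recurrence in its forward form so that $U_{n+1} - q U_n$ and $V_{n+1} - q V_n$ telescope cleanly into the single indices $J_{n+2}$ and $j_{n+2}$ that appear in the stated closed forms.
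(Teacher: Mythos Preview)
Your proposal is correct and mirrors the paper's proof essentially line for line: the paper also invokes Theorem~\ref{thm:weighted_sum_U_seq_when_p_minus_q_is_not_equal_to_1} with $(p,q)=(1,-2)$, uses $p-q-1=2$ and $\Omega(n)=J_{n+3}-3$ from Lemma~\ref{lem:omega_psi_for_Jac_JacLuc}, collapses $J_{n+1}+2J_n$ into $J_{n+2}$ via the recurrence, and then declares the Jacobstahl--Lucas case analogous via Theorem~\ref{thm:weighted_sum_V_seq_when_p_minus_q_is_not_equal_to_1} with the extra $2q=-4$ term. Your write-up is in fact slightly more explicit about the second identity than the paper's, which simply says ``follows analogously.''
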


\begin{proof}
For the Jacobstahl and Jacobstahl-Lucas sequences, we have $(p,q) = (1,-2)$ and hence $p - q \neq 1$, so we use Theorem~\ref{thm:weighted_sum_U_seq_when_p_minus_q_is_not_equal_to_1} to compute the weighted sum $\sum_{i=1}^n i J_i$ as follows:
\begin{align*}
\sum_{i=1}^n i J_i  &= \frac{1}{p-q-1} \Bigl( n(J_{n+1} - q J_n) - \frac{\Omega(n)}{p-q-1} \Bigr)\\
    &= \frac{1}{2} \Bigl(n (J_{n+1} + 2 J_n) - \frac{J_{n+3} - 3}{2} \Bigr)\\
    &= \frac{1}{2} \Bigl(n J_{n+2} - \frac{J_{n+3} - 3}{2} \Bigr),
\end{align*}
where the second equality holds since $p-q-1 = 2$ and by Equation~\eqref{eq:omega_psi_for_Jac_JacLuc} for $\Omega(n)$, thereby proving Equation~\eqref{eq:weighted_sum_for_Jac}. The proof for Equation~\eqref{eq:weighted_sum_for_JacLuc} follows analogously as the latter sequence of equalities by expanding $\sum_{i=1}^n i j_i$ using Theorem~\ref{thm:weighted_sum_V_seq_when_p_minus_q_is_not_equal_to_1}, replacing $V_k$'s with $j_k$'s, and by using the $\Psi(n)$ closed form from Equation~\eqref{eq:omega_psi_for_Jac_JacLuc}.
\end{proof}


\subsection{Weighted sums for the sequences \texorpdfstring{$\MerSeq$}{Mersenne} and \texorpdfstring{$\MerLucSeq$}{Mersenne-Lucas}}

\begin{theorem}\label{thm:Mer_MerLuc_weighted_sum}
The following identities hold for the Mersenne and Mersenne-Lucas sequences:
\begin{align}
\sum_{i=1}^n i \Mer_i &= 2^{n+1}(n-1) + 2 - \binom{n+1}{2} \label{eq:weighted_sum_for_Mer}\\
\sum_{i=1}^n i \MerLuc_i &= 2^{n+1}(n-1) + 2 + \binom{n+1}{2}. \label{eq:weighted_sum_for_MerLuc}
\end{align}
\end{theorem}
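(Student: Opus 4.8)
The plan is to observe that, unlike the four sequence pairs treated in the preceding subsections, the Mersenne pair $\MerSeq$ and $\MerLucSeq$ arises from $(p,q) = (3,2)$, for which $p - q = 1$. Hence the relevant machinery is \emph{not} the $\Omega(n)$ and $\Psi(n)$ functions of Equations~\eqref{eq:Omega_function} and \eqref{eq:Psi_function} (which presuppose $p - q \neq 1$), but rather the closed forms of Subsection~\ref{subsec:weighted_sum_when_p_minus_q_equals_1}, and in particular their tidy reformulations in Corollaries~\ref{cor:weighted_sum_U_seq_when_p_minus_q_is_1} and \ref{cor:weighted_sum_V_seq_when_p_minus_q_is_1}. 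Recognizing that this pair falls under the $p-q=1$ regime, so that one reaches for the correct results, is the only conceptual point to get right.

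First I would record, via Lemma~\ref{lem:U_and_V_seq_when_p_minus_q_is_1} applied at $q = 2$, that $\Mer_n = U_n(3,2) = \frac{2^n - 1}{2 - 1} = 2^n - 1$ and $\MerLuc_n = V_n(3,2) = 2^n + 1$, confirming that these are genuinely the sequences named in Table~\ref{table:10_well_known_Lucas_sequences}. The key simplification is that with $q = 2$ the discriminant collapses to $\Delta = (q-1)^2 = 1$ and the prefactor $\frac{q}{q-1}$ becomes $2$, so both corollaries lose all denominators, which is exactly why the final formulas are so clean.

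Next I would substitute $q = 2$ and $\Delta = 1$ directly into Corollary~\ref{cor:weighted_sum_U_seq_when_p_minus_q_is_1}, giving
\[
\sum_{i=1}^n i \Mer_i = 2\Bigl( 2^n(2n - n - 1) + 1 - \tfrac{1}{2}\tbinom{n+1}{2} \Bigr) = 2^{n+1}(n-1) + 2 - \binom{n+1}{2},
\]
which is Equation~\eqref{eq:weighted_sum_for_Mer}. Likewise, substituting $q = 2$ and $\Delta = 1$ into Corollary~\ref{cor:weighted_sum_V_seq_when_p_minus_q_is_1} yields
\[
\sum_{i=1}^n i \MerLuc_i = 2^{n+1}(2n - n - 1) + 2 + \binom{n+1}{2} = 2^{n+1}(n-1) + 2 + \binom{n+1}{2},
\]
which is Equation~\eqref{eq:weighted_sum_for_MerLuc}.

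There is essentially no obstacle here: both identities are immediate specializations of already-proved corollaries, and the work is purely a substitution-and-simplify step rather than a new argument. As an optional robustness check I would verify both formulas at $n = 1$ and $n = 2$ against the values $\Mer_1 = 1$, $\Mer_2 = 3$, $\MerLuc_1 = 3$, and $\MerLuc_2 = 5$ read off from Table~\ref{table:10_well_known_Lucas_sequences}, which confirm $\sum i\Mer_i = 1, 7$ and $\sum i\MerLuc_i = 3, 13$ as predicted.
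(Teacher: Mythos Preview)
Your proposal is correct and follows essentially the same approach as the paper: recognizing that $(p,q)=(3,2)$ gives $p-q=1$ and $\Delta=1$, and then substituting directly into Corollaries~\ref{cor:weighted_sum_U_seq_when_p_minus_q_is_1} and \ref{cor:weighted_sum_V_seq_when_p_minus_q_is_1} to obtain Equations~\eqref{eq:weighted_sum_for_Mer} and \eqref{eq:weighted_sum_for_MerLuc}. The paper's proof is exactly this specialization, without the additional sanity checks you mention.
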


\begin{proof}
For the Mersenne and Mersenne-Lucas sequences, we have $(p,q) = (3,2)$ and hence $p - q = 1$ and $\Delta = (q-1)^2 = 1$. To prove Equation~\eqref{eq:weighted_sum_for_Mer}, we compute $\sum_{i=1}^n i \Mer_i$ using the formula in Corollary~\ref{cor:weighted_sum_U_seq_when_p_minus_q_is_1}, as follows:
\begin{align*}
\sum_{i=1}^n i \Mer_i &= \frac{q}{q-1} \biggl( \frac{q^n (nq-n-1) + 1}{\Delta} - \frac{1}{q} \binom{n+1}{2} \biggr)\\
    &= 2 \biggl( 2^n (2n-n-1) + 1 - \frac{1}{2} \binom{n+1}{2}\biggr)\\
    &= 2^{n+1} (n-1) + 2 - \binom{n+1}{2},
 \end{align*}
thereby proving Equation~\eqref{eq:weighted_sum_for_Mer}. To prove Equation~\eqref{eq:weighted_sum_for_MerLuc}, we compute $\sum_{i=1}^n i \MerLuc_i$ using the formula in Corollary~\ref{cor:weighted_sum_V_seq_when_p_minus_q_is_1}, again noting $\Delta = (q-1)^2 = 1$, as follows:
 \begin{align*}
\sum_{i=1}^n i \MerLuc_i &= \frac{q^{n+1} (nq-n-1)+q}{\Delta} + \binom{n+1}{2}\\
    &= 2^{n+1} ( 2n - n - 1) + 2 + \binom{n+1}{2}\\
    &= 2^{n+1}(n-1) + 2 + \binom{n+1}{2},
\end{align*}
thereby proving Equation~\eqref{eq:weighted_sum_for_MerLuc}.
\end{proof}


\section{Further research and open questions}\label{sec:open_questions}

\subsection{Reverse weighted sums}

In this paper we derived closed forms for weighted sums of the form $\sum_{i=1}^n i U_i$ and $\sum_{i=1}^n i V_i$. It is then natural to explore closed forms for reverse weighted sums; that is, sums of the form $\sum_{i=1}^n (n-(i-1)) \cdot U_i$ and $\sum_{i=1}^n (n-(i-1)) \cdot V_i$. One method of approach is to observe that for any sequence $\GenericSeq$, the reverse weighted sum can be written as a sum of consecutive sums as follows:
$$ \sum_{i=1}^n (n-(i-1)) \cdot S_i = \sum_{k=1}^n \sum_{i=1}^k S_i,$$
and then we use our results on consecutive sums from Section~\ref{sec:main_results_consecutive_sums}. As a second method of approach, the reverse weighted sum can be written as the following difference of two sums:
$$ \sum_{i=1}^n (n-(i-1)) \cdot S_i = (n+1) \cdot \sum_{i=1}^n S_i - \sum_{i=1}^n i S_i,$$
and then we use our results on consecutive and weighted sums in Sections~\ref{sec:main_results_consecutive_sums} and \ref{sec:main_results_weighted_sums}, respectively. Currently, we are making some headway utilizing this second method.


\subsection{Open questions}

\begin{question}
Lucas gave closed forms for $\sum_{i=1}^n U_{ir}$ and $\sum_{i=1}^n V_{ir}$ in his Identity~(56) in his seminal 1878 paper~\cite{Lucas1878} (see Equations~\eqref{eq:Lucas_corrected_U_identity} and \eqref{eq:Lucas_corrected_V_identity} in the introduction to Subsection~\ref{subsec:consecutive_sum_when_p_minus_q_does_not_equal_1}, and also see Appendix~\ref{app:correct_proof_for_Lucas_Identity_(56)}). Can we use Lucas' closed forms for $\sum_{i=1}^n U_{ir}$ and $\sum_{i=1}^n V_{ir}$ and techniques in this current paper to find closed forms for the following weighted sums:
$$ \sum_{i=1}^n i U_{ir} \;\text{ and }\; \sum_{i=1}^n i V_{ir} $$
with arbitrary $r,n \in \mathbb{N}$? For $r=2$ in the Fibonacci sequence setting, one can use the Abel summation-by-parts method from this paper and the identities $\sum_{i=1}^n F_{2i} = F_{2n+1} - 1$ and $\sum_{i=1}^{n-1} F_{2i+1} = F_{2n} - 1$ to show that
\begin{align}
    \sum_{i=1}^n i F_{2i} &= n F_{2n+1} - F_{2n}. \label{eq:open_question_Fib}
\end{align}
For $r=2$ in the Lucas sequence setting, we can again use Abel summation-by-parts method and the identities $\sum_{i=1}^n L_{2i} = L_{2n+1} - 1$ and $\sum_{i=1}^{n-1} L_{2i+1} = L_{2n} - 3$ to show that
\begin{align}
    \sum_{i=1}^n i L_{2i} &= n L_{2n+1} - L_{2n} + 2. \label{eq:open_question_Luc}
\end{align}
To find general closed forms for $\sum_{i=1}^n i U_{ir}$ and $\sum_{i=1}^n i V_{ir}$, we can start with $r=2$ and try to generalize Equations~\eqref{eq:open_question_Fib} and \eqref{eq:open_question_Luc} to the Lucas sequences of the first and second kind.
\end{question}

\begin{question}
In 1995, Gauthier considered weighted sums of the form $\sum_{i=1}^n i^m F_i$, but for $m \geq 4$, the closed forms become unwieldy and complicated (see Table~\ref{table:weighted_Fibonacci_sum_examples} for $m=1,2,3$). Instead, we can consider keeping the weight as $i$ and exponentiate the sequence terms. That is, can we find closed forms for $\sum_{i=1}^n i U_i^m$ and $\sum_{i=1}^n i V_i^m$ for $m \geq 2$? For $m=2$ in the Fibonacci sequence setting, one can use the Abel summation-by-parts method from this paper and the identities $\sum_{i=1}^n F_i^2 = F_n F_{n+1}$ and $\sum_{i=1}^{n-1} F_i F_{i+1} = F_n^2 - \frac{1-(-1)^n}{2}$ to show that
\begin{align}
    \sum_{i=1}^n i F_i^2 &= n F_n F_{n+1} - F_n^2 + \frac{1-(-1)^n}{2}. \label{eq:open_question_Fib_power}
\end{align}
For $m=2$ in the Lucas sequence setting, we can again use Abel summation-by-parts method and the identities $\sum_{i=1}^n L_i^2 = L_n L_{n+1} - 2$ and $\sum_{i=1}^{n-1} L_i L_{i+1} = L_n^2 - \frac{3+5(-1)^n}{2} - 2$ to show that
\begin{align}
    \sum_{i=1}^n i L_i^2 &= n L_n L_{n+1} - L_n^2 + \frac{3+5(-1)^n}{2}. \label{eq:open_question_Luc_power}
\end{align}
To find general closed forms for $\sum_{i=1}^n i U_i^m$ and $\sum_{i=1}^n i V_i^m$, we can start with $m=2$ and try to generalize Equations~\eqref{eq:open_question_Fib_power} and \eqref{eq:open_question_Luc_power} to the Lucas sequences of the first and second kind.
\end{question}

\begin{question}
Recall that $\Omega(n) = U_{n+1} - 2q U_n + q^2 U_{n-1} + q - 1$ from Theorem~\ref{thm:weighted_sum_U_seq_when_p_minus_q_is_not_equal_to_1} for $\sum_{i=1}^n i U_i$. In the introduction to Section~\ref{sec:applications_to_well_known_sequences}, we observed that
\begin{align*}
    \text{Fibonacci } \FibSeq \text{ case:} \;\;\; (p,q)=(1,-1) &\implies \Omega(n) = F_{n+3} - 2 \\
    \text{Jacobstahl } \JacSeq \text{ case:} \;\;\; (p,q)=(1,-2) &\implies \Omega(n) = J_{n+3} - 3
\end{align*}
In more generality, it turns out that if we fix $p=1$, then $\Omega(n) = U_{n+3} + q - 1$ since
$$ U_{n+1} - 2q U_n + q^2 U_{n-1} = (U_{n+1} - q U_n) - q (U_n - q U_{n-1}) = U_{n+2} - q U_{n+1} = U_{n+3}.$$
Similarly, since $\Psi(n) = V_{n+1} - 2q V_n + q^2 V_{n-1} + (p-2q)(q-1)$, then fixing $p=1$, we have $\Psi(n) = V_{n+3} + (1-2q)(q-1)$. Can we find other nice closed forms for $\Omega(n)$ and $\Psi(n)$ when we fix $p \in \mathbb{Z}\backslash\{1\}$ and let $q$ vary?
\end{question}

\begin{question}
Related to the previous question, fixing $q=1$, we can readily verify that $\Omega(n) = (p-2) U_n$ and $\Psi(n) = (p-2) V_n$ by utilizing the following identities:
$$ U_{n+1} + U_{n-1} = p U_n \;\text{ and }\; V_{n+1} + V_{n-1} = p V_n$$
when $q=1$. Can we find other nice closed forms for $\Omega(n)$ and $\Psi(n)$ when we fix $q \in \mathbb{Z}\backslash\{1\}$ and let $p$ vary?
\end{question}


\section*{Acknowledgments}
The author is especially grateful for his non-human collaborators, the On-line Encyclopedia of Integer Sequences (OEIS) database~\cite{Sloane-OEIS} and \texttt{MATHEMATICA}, both of which were invaluable for developing conjectures and testing the validity of all of the formulas that became the main results in this paper.



\newpage

\begin{appendices}
\begin{center}
    \Large\textbf{Appendix}
\end{center}
\section{Corrected statement/proof for Lucas' Identity~(56)}\label{app:correct_proof_for_Lucas_Identity_(56)}

Lucas' Identity~(56) for closed forms for $\sum_{i=1}^n U_{ir}$ and $\sum_{i=1}^n V_{ir}$ were both incorrect in his seminal 1878 paper~\cite{Lucas1878}. See Equations~\eqref{eq:Lucas_corrected_U_identity} and \eqref{eq:Lucas_corrected_V_identity} in the introduction to Subsection~\ref{subsec:consecutive_sum_when_p_minus_q_does_not_equal_1} for the corrected closed forms. The closed form for $\sum_{i=1}^n U_{ir}$ had only a minor exponent typo and was corrected in the Fibonacci Association's 1969 English translation given by Sidney Kravitz and edited by Douglas Lind (see~\url{https://www.mathstat.dal.ca/FQ/Books/Complete/simply-periodic.pdf}). However, their attempt at correcting the closed form for $\sum_{i=1}^n V_{ir}$ was still not correct, as they missed a summand of $-2 q^r$ in the numerator. We give the correct closed form in Theorem~\ref{thm:corrected_Identity_(56)_for_V_seq} and provide our proof, establishing that this indeed was the statement that Lucas intended to write. To prove this new formulation of the incorrect identity, we employ the use of the following result from Lucas' paper.

\begin{lemma}[{\cite[Identity~(13)]{Lucas1878}}]\label{lem:Lucas_Identity_(13)}
For all integers $m,r \geq 0$, the following identity holds:
$$V_{m + 2r} = V_r V_{m+r} - q^r V_m.$$
\end{lemma}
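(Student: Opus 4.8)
The plan is to prove the identity by a direct computation with the Binet forms from Proposition~\ref{prop:identities_on_the_roots_of_the_characteristic_equation}, avoiding induction entirely. Recall that $V_n = \alpha^n + \beta^n$, where $\alpha$ and $\beta$ are the roots of $x^2 - px + q$. Either by Vieta's formulas or directly from the closed forms $\alpha = \frac{p+\sqrt{\Delta}}{2}$ and $\beta = \frac{p-\sqrt{\Delta}}{2}$, the product of the roots satisfies
\[
\alpha\beta = \frac{p^2 - \Delta}{4} = \frac{p^2 - (p^2 - 4q)}{4} = q,
\]
and this single fact is the engine of the proof.

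First I would expand the product appearing on the right-hand side:
\[
V_r V_{m+r} = (\alpha^r + \beta^r)(\alpha^{m+r} + \beta^{m+r}) = \alpha^{m+2r} + \beta^{m+2r} + \alpha^r\beta^{m+r} + \beta^r\alpha^{m+r}.
\]
The first two terms are precisely $V_{m+2r}$. For the two cross terms, I would factor out the common power $(\alpha\beta)^r$, obtaining
\[
\alpha^r\beta^{m+r} + \beta^r\alpha^{m+r} = (\alpha\beta)^r(\beta^m + \alpha^m) = q^r V_m,
\]
where I use $\alpha\beta = q$ together with the Binet form a second time. Combining these gives $V_r V_{m+r} = V_{m+2r} + q^r V_m$, and rearranging yields the claimed identity $V_{m+2r} = V_r V_{m+r} - q^r V_m$.

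There is no substantive obstacle here: the whole argument is a one-line expansion followed by recognition of the cross terms. The only point demanding a moment of care is the correct pairing of the mixed terms $\alpha^r\beta^{m+r}$ and $\beta^r\alpha^{m+r}$, so that the factor $(\alpha\beta)^r = q^r$ emerges cleanly and the residual $\alpha^m + \beta^m$ reassembles into $V_m$ with the intended index rather than some shifted one. Since the Binet form is valid for every nonnegative exponent, the hypothesis $m, r \geq 0$ is accommodated automatically, and no separate base cases or degenerate subcases arise.
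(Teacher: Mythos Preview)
Your argument is correct: the expansion of $V_r V_{m+r}$ via the Binet forms, together with $\alpha\beta = q$, cleanly yields $V_{m+2r} + q^r V_m$, and the identity follows. The paper itself does not prove this lemma; it is simply quoted from Lucas~\cite[Identity~(13)]{Lucas1878} and then applied in the proof of Theorem~\ref{thm:corrected_Identity_(56)_for_V_seq}, so there is no original proof against which to compare your approach.
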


We now present the correct statement and our proof for the closed form for $\sum_{i=1}^n V_{ir}$. Lucas actually described (but did not execute in his paper) a method for finding this closed form by successively replacing $m$ with the values $0, r, 2r, \ldots, (n-1)r$. We execute this method to derive the following corrected identity.

\begin{theorem}\label{thm:corrected_Identity_(56)_for_V_seq}
For all integers $n \geq 1$, the following identity holds:
$$ \sum_{i=1}^n V_{ir} = \frac{V_r + q^r V_{nr} - V_{(n+1)r} - 2q^r}{1 + q^r - V_r}.$$
\end{theorem}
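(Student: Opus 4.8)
The plan is to apply Lemma~\ref{lem:Lucas_Identity_(13)} with $m = kr$ for each $k \in \{0, 1, \ldots, n-1\}$ and then sum the resulting identities, exactly the ``successive replacement'' scheme that Lucas described but never carried out. Denote by $S$ the target sum $\sum_{i=1}^n V_{ir}$. Substituting $m = kr$ into the identity $V_{m+2r} = V_r V_{m+r} - q^r V_m$ gives $V_{(k+2)r} = V_r V_{(k+1)r} - q^r V_{kr}$ for each such $k$. Summing over $k = 0$ to $n-1$ collapses these into the single equation
\begin{equation*}
\sum_{k=0}^{n-1} V_{(k+2)r} = V_r \sum_{k=0}^{n-1} V_{(k+1)r} - q^r \sum_{k=0}^{n-1} V_{kr}.
\end{equation*}
The strategy is then to rewrite each of these three sums in terms of $S$ together with a few boundary terms, and to solve the resulting linear equation for $S$.

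For the re-indexing, I would observe that $\sum_{k=0}^{n-1} V_{(k+2)r} = \sum_{j=2}^{n+1} V_{jr} = S - V_r + V_{(n+1)r}$, that the first sum on the right equals $\sum_{j=1}^{n} V_{jr} = S$, and that the last sum equals $V_0 + \sum_{j=1}^{n-1} V_{jr} = 2 + (S - V_{nr})$, using the initial value $V_0 = 2$. Substituting these three expressions back into the summed identity transforms it into
\begin{equation*}
S - V_r + V_{(n+1)r} = V_r \, S - q^r \bigl( 2 + S - V_{nr} \bigr).
\end{equation*}

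Finally, I would collect all occurrences of $S$ on one side, so the equation becomes $S \, (1 + q^r - V_r) = V_r + q^r V_{nr} - V_{(n+1)r} - 2 q^r$, and dividing by $1 + q^r - V_r$ yields the claimed closed form. The main obstacle is purely the index bookkeeping: correctly tracking which boundary terms ($V_r$, $V_{nr}$, $V_{(n+1)r}$, and the $V_0 = 2$ contribution) survive when each of the three sums is shifted to align with $S$. An off-by-one slip in precisely this step is what produced the erroneous missing $-2q^r$ in the earlier published correction, so I would double-check the endpoints with care. I would also note that the division requires $1 + q^r - V_r \neq 0$; writing $1 + q^r - V_r = (1 - \alpha^r)(1 - \beta^r)$ via $q^r = \alpha^r \beta^r$ and $V_r = \alpha^r + \beta^r$ shows this holds precisely when neither $\alpha^r$ nor $\beta^r$ equals $1$, which is the natural nondegeneracy hypothesis and specializes, for $r = 1$, to the condition $p - q \neq 1$ appearing in Theorem~\ref{thm:consec_sum_V_seq_when_p_minus_q_is_not_equal_to_1}.
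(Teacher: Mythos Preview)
Your proof is correct and follows essentially the same approach as the paper: apply Lemma~\ref{lem:Lucas_Identity_(13)} with $m=0,r,\ldots,(n-1)r$, sum, re-index each of the three sums in terms of $S=\sum_{i=1}^n V_{ir}$, and solve the resulting linear equation for $S$. Your additional remark that $1+q^r-V_r=(1-\alpha^r)(1-\beta^r)$, which justifies the division and connects to the nondegeneracy hypothesis, is a nice touch that the paper's proof omits.
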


\begin{proof}
By successively replacing $m$ with the values $0, r, 2r, \ldots, (n-1)r$ in Lemma~\ref{lem:Lucas_Identity_(13)}, we get the following sequence of equalities:
\begin{align*}
V_{2r} &= V_r V_r - q^r V_0\\
V_{3r} &= V_r V_{2r} - q^r V_r\\
V_{4r} &= V_r V_{3r} - q^r V_{2r}\\
&\;\;\vdots \\
V_{(n+1)r} &= V_r V_{nr} - q^r V_{(n-1)r}.
\end{align*}
If we sum all of the left-hand sides of the $n$ equalities and equate them with the sum of all the right-hand sides of the $n$ equalities, then we get
$$ \sum_{i=2}^{n+1} V_{ir} = V_r \left( \sum_{i=1}^{n} V_{ir} \right) - q^r \left( \sum_{i=0}^{n-1} V_{ir} \right).$$
Setting $S \coloneq \sum_{i=1}^n V_{ir}$, we can rewrite the equation above succinctly as
\begin{align}
S - V_r + V_{(n+1)r} = V_r S - q^r S - q^r V_0 + q^r V_{nr}. \label{eq:S_equation}
\end{align}
Observe the following sequence of implications:
\begin{align*}
\text{Equation~\eqref{eq:S_equation}} &\implies  (1 + q^r - V_r) S = V_r + q^r V_{nr} - V_{(n+1)r} - q^r V_0\\
    &\implies S = \frac{V_r + q^r V_{nr} - V_{(n+1)r} - 2 q^r}{1 + q^r - V_r},
\end{align*}
where the last equality holds since $V_0 = 2$. Thus the result follows.
\end{proof}

\section{Mathematica file for our closed forms in Sections~5 and 6}\label{app:mathematica_files}

To validate the closed forms we proved in Section~\ref{sec:applications_to_well_known_sequences}, we used \texttt{MATHEMATICA}. We also used this software to confirm the results we presented without proofs in the open questions part of Section~\ref{sec:open_questions}. For the motivated reader, interested in the code we developed, we provide the code and corresponding data tables both as an .nb file and a .pdf file for those who do not have access to \texttt{MATHEMATICA}. You can find these files on this part of the author's webpage:
\begin{center}
\redbf{\url{https://people.uwec.edu/mbirika/aBa_weighted_sum_webpage.html}}
\end{center}

\end{appendices}

\end{document}